\newcommand{\Z}{\mathbb{Z}}
\newcommand{\Q}{\mathbb{Q}}
\newcommand{\K}{\mathbb{K}}
\renewcommand{\phi}{\varphi}
\newcommand{\Norm}{\mathcal{N}}
\newcommand{\eps}{\varepsilon}
\newcommand{\bb}{,\ldots ,}
\newtheorem*{theorem*}{Theorem}
\newtheorem{thm}{Theorem}
\newtheorem{lem}{Lemma}
\newtheorem*{claim*}{Claim}
	\newcounter{countknownthm}
\theoremstyle{definition}
	\newtheorem{rem}{Remark}
\begin{document}
\title[Family of Thue Equations involving Fibonacci Numbers and Powers of Two]{On a cubic Family of Thue Equations involving Fibonacci Numbers and Powers of Two}
\subjclass[2020]{11D59, 11D25, 11B37} 
% 11B37   	Recurrences {For applications to special functions, see 33-XX}
% 11B39	 	Fibonacci and Lucas numbers and polynomials and generalizations
% 11D45   	Counting solutions of Diophantine equations
% 11D25		Cubic and quartic Diophantine equations
% 11D59		Thue-Mahler equations
\keywords{Thue equations, Thomas' conjecture, recurrence sequences}
\thanks{The author was supported by the Austrian Science Fund (FWF) under the project I4406.
}

\author[I. Vukusic]{Ingrid Vukusic}
\address{I. Vukusic,
University of Salzburg,
Hellbrunnerstrasse 34/I,
A-5020 Salzburg, Austria}
\email{ingrid.vukusic\char'100sbg.ac.at}

\begin{abstract}
In this paper we completely solve the family of parametrised Thue equations
\[
	X(X-F_n Y)(X-2^n Y)-Y^3=\pm 1,
\]
where $F_n$ is the $n$-th Fibonacci number. In particular, for any integer $n\geq 3$ the Thue equation has only the trivial solutions $(\pm 1,0), (0,\mp 1), \mp(F_n,1), \mp(2^n,1)$.
\end{abstract}

\maketitle

\section{Introduction}

Thue equations are Diophantine equations of the form
\[
	F(X,Y)=m,
\]
where $F \in \Z[X,Y]$ is an irreducible homogeneous polynomial of degree $d\geq 3$ and $m$ is a fixed non-zero integer. In 1909 A. Thue \cite{Thue1909} proved that such an equation has at most finitely many solutions. Thanks to Baker's theory of linear forms in logarithms and many other contributions, it is today possible to (at least in principle) solve any Thue equation effectively and in many cases also rather efficiently. For an overview of the methods and references see e.g. \cite[Chapter VII]{Smart1998}.

Less fully understood are parametrised families of Thue equations, that is Thue equations where the coefficients in $F$ depend on one or more parameters. Already Thue himself \cite{Thue1918} considered a family of Thue equations, namely $aX^d - bY^d =c$, and hinted that in particular the family
\[
	(n+1)X^d - nX^d=1
\]
might be solved for $d$ prime. In the meantime this family has been solved completely for arbitrary degree $d\geq 3$ by Bennett \cite{Bennett2001}.

E. Thomas investigated other types of parametrised Thue equations. In 1990 he  \cite{Thomas1990} completely solved the cubic family
\[
	X^3 - (n-1) X^2 Y - (n+2) XY^2 - Y^3 =1.
\] 
Since then, many families of Thue equations have been studied by various authors, for a detailed list see \cite{Heuberger2006}. In many of the families the coefficients of $F$ are specific polynomials in $n$. There are also more general results: Thomas \cite{Thomas1993} considered families of the form
\begin{equation}\label{eq:thomas}
	\prod_{i=1}^d (X-p_i(n)Y)- Y^d = \pm 1,
\end{equation}
where the $p_i(n)$ are polynomials. He conjectured that if the polynomials are monic, $p_1=0$ and $\deg p_2 < \dots < \deg p_n$, then for sufficiently large $n$ the only solutions to \eqref{eq:thomas} are $(x,y)=(\pm 1,0)$ and $(x,y)=\mp (p_i,1)$ for $i=1\bb n$. He proved his conjecture in the case $d=3$ under some technical hypothesis. The original conjecture, however, is wrong, as Ziegler \cite{Ziegler2007} provided two counterexamples. It is not yet clear if these are the only two counterexamples in the case $d=3$ and whether there are counterexamples for $d\geq 4$. 
Heuberger \cite{Heuberger2001} generalised Thomas' result and proved his conjecture under some technical hypothesis for any $d\geq 3$. 

Recently, the authors in \cite{HilgartVukusicZiegler2021} replaced the polynomials in \eqref{eq:thomas} by some specific linear recurrence sequences.
They solved the family
\[
	X(X-F_nY)(X-L_nY)-Y^3=\pm 1,
\]
where $F_n$ is the $n$-th Fibonacci number and $L_n$ is the $n$-th Lucas number. To the author's knowledge, this was the first time that an exponentially parametrised family was considered.

In this paper, we solve a similar cubic family of Thue equations, but in our case the two occurring linear recurrence sequences shall have dominant roots of distinct size.
This is comparable to the assumption in Thomas' conjecture that the polynomials should have increasing degrees.

To be precise, we will consider powers of two and the Fibonacci numbers, defined by $F_0=0,F_1=1$ and $F_{n+1}=F_n+F_{n-1}$ for $n\geq 1$. We will prove

\begin{thm}\label{thm:main}
For any integer $n\geq 3$ the Thue equation
\begin{equation}\label{eq:main}
	X(X-F_n Y)(X-2^n Y)-Y^3=\pm 1
\end{equation}
has only the trivial solutions
\begin{equation}\label{eq:trivial_sols}
(\pm 1,0), (0,\mp 1), \mp(F_n,1), \mp(2^n,1).
\end{equation}
For $n=1$ and $n=2$ the only non-trivial solutions are $\pm(7,3)$ and $\mp(1,2)$ respectively.
\end{thm}

We will use Heuberger's method \cite{Heuberger2001} as well as standard methods for solving Thue equations. 

In the next section we will recall some auxiliary results, including a lower bound for linear forms in logarithms and two reduction methods. In Section~\ref{sec:setup} we first solve Equation~\eqref{eq:main} for $n\leq 28$ with a computer. Then, assuming $n>28$, we do the standard preparations for solving such Thue equations, like estimating the roots of the polynomial $X(X-F_n)(X-2^n)-1$. In Section~\ref{sec:uBy-n1000} we find an upper bound for $\log |y|$, which is cubic in $n$. Moreover, we solve Equation \eqref{eq:main} for all $n\leq 1000$. Assuming $n>1000$, in Section \ref{sec:lowerBoundLogY} we find a lower bound for $\log |y|$ which is exponential in $n$. Thus, from the upper and the lower bound for $\log |y|$, we get an absolute upper bound for $n$ in Section \ref{sec:finish}. We finish the proof by reducing $n$ to less than 1000 with an LLL reduction. 

Finally, let us mention that the type $j$ of a solution plays an important role in the proof. If $(x,y)$ is a solution to \eqref{eq:main}, then by the general theory of Thue equations $x/y$ will be an extremely good approximation to one of the roots of $X(X-F_n)(X-2^n)-1=(X-\alpha^{(1)})(X-\alpha^{(2)})(X-\alpha^{(3)})$. If $x/y$ approximates $\alpha^{(j)}$, then we say that $(x,y)$ is of type $j$. In the proof it will make a difference whether $j=1,2$ or $j=3$ (which is the case when $x/y$ approximates the largest root). 
In fact, in Section \ref{sec:lowerBoundLogY} it will be much easier to obtain a good lower bound for $\log |y|$ in the case $j=3$.
Also in \cite{Heuberger2001} the cases $j=1,2$ are the particularly difficult ones. Moreover, because we exchange polynomials for recurrence sequences, we need to apply linear forms in logarithms one more time than in \cite{Heuberger2001} in these cases.
On the other hand, in Section \ref{sec:uBy-n1000} the case $j=3$ will require more computations than $j=1,2$.

\section{Auxiliary results}\label{sec:aux}

In this section we recall the Binet formula for the Fibonacci numbers, the notion of heights, lower bounds for linear forms in logarithms, the Baker-Davenport reduction method, the reduction method using the LLL-algorithm, and some elementary inequalities.

For the Fibonacci numbers we have the well known Binet formula
\[
	F_n = \frac{\phi^n - \psi^n}{\sqrt{5}},
	\quad \text{where} \quad
	\phi = \frac{1+\sqrt{5}}{2}
	\quad \text{and} \quad
	\psi = \frac{1-\sqrt{5}}{2}.
\]

Now we recall the notion of heights.
Let $\gamma$ be an algebraic number of degree $d\geq 1$ with the minimal polynomial
\[
	a_d X^d + \dots +a_1 X + a_0 
	= a_d \prod _{i=1}^{d} (X- \gamma^{(i)}),
\]
where $a_0, \dots, a_d$ are relatively prime integers and $\gamma^{(1)}, \dots, \gamma^{(d)}$ are the conjugates of $\gamma$. Then the \textit{logarithmic height} of $\gamma$ is given by
\[
	h(\gamma)
	= \frac{1}{d} \left(
		\log |a_d|
		+ \sum_{i=1}^d \log \left( \max \{ 1,|\gamma^{(i)} |\} \right)
		\right).
\]
The following well-known properties of the logarithmic height hold for any algebraic numbers $\gamma_1\bb \gamma_t$ and $s\in \Z$:
\begin{itemize}
\item $h(\gamma_1\cdots \gamma_t) \leq h(\gamma_1)+\dots + h(\gamma_t)$,
\item $h(\gamma_1 + \dots + \gamma_t)\leq h(\gamma_1) + \dots + h(\gamma_t) +\log t$,
\item $h(\gamma^s)=|s|h(\gamma)$.
\end{itemize}

Next, we state a lower bound for linear forms in three logarithms, which is a special case of \cite[Corollary 2.3]{Matveev2000}.

\begin{lem}[Matveev]\label{lem:matveev}
 Let $\gamma_1, \gamma_2, \gamma_3$ be non-zero real algebraic numbers in a number field $\K$ of degree $D$, let $b_1,b_2,b_3$ be rational integers, and let
\[
	\Lambda:= b_1 \log \gamma_1 + b_2 \log \gamma_2 + b_3 \log \gamma_3
\]
be non-zero. Then 
\[
	\log |\Lambda|>
		-1.4\cdot 10^{11} D^2 \log (eD) \log (eB) A_1 A_2 A_3,
\]
where
\[
	B\geq \max \left\{|b_1|,|b_2|,|b_3|\right\}
\]
and
\[
	A_i\geq \max\left\{D h(\gamma_i),|\log \gamma_i|,0.16 \right\}
	\qquad (i=1,2,3).
\]
\end{lem}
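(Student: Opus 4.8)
The plan is to obtain Lemma~\ref{lem:matveev} as a direct specialisation of Matveev's general lower bound to the case of exactly three logarithms over a real field, so that no new Diophantine input is needed beyond the cited theorem. First I would recall the shape of the general estimate from \cite[Corollary 2.3]{Matveev2000}: for non-zero algebraic numbers $\gamma_1\bb \gamma_n$ in a number field of degree $D$ and rational integers $b_1\bb b_n$ with $\Lambda=\sum_i b_i\log\gamma_i\neq 0$, one has
\[
  \log|\Lambda|>-C(n,\kappa)\,D^2\log(eD)\log(eB)\,A_1\cdots A_n,
\]
where $\kappa\in\{1,2\}$ records whether the ambient field is real or not, $B$ bounds the $|b_i|$ from above, and each $A_i$ bounds $\max\{Dh(\gamma_i),|\log\gamma_i|\}$ together with an absolute positive constant. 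The constant $C(n,\kappa)$ is explicit, and in the favourable real case $\kappa=1$ it has (essentially) the form $1.4\cdot 30^{n+3}n^{4.5}$.

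Next I would insert $n=3$. Since the hypothesis places $\gamma_1,\gamma_2,\gamma_3$ in a real number field $\K$ and takes them real, we are exactly in the case $\kappa=1$, so the sharper real-case constant applies. Evaluating it at $n=3$ gives $C(3,1)=1.4\cdot 30^{6}\cdot 3^{4.5}$, and bounding this numerically produces a value absorbed into the displayed constant $1.4\cdot 10^{11}$. The factor $D^2\log(eD)$ and the dependence $\log(eB)=1+\log B$ on $B$ carry over verbatim, while the product $A_1A_2A_3$ is precisely the one appearing in the statement. I would then verify that the normalisation of the $A_i$ matches: Matveev requires each $A_i$ to dominate $\max\{Dh(\gamma_i),|\log\gamma_i|\}$ and also to be bounded below by an absolute positive constant, here $0.16$, which keeps the estimate non-vacuous; taking $A_i$ to be any upper bound for $\max\{Dh(\gamma_i),|\log\gamma_i|,0.16\}$ is permissible because the right-hand side is monotone increasing in each $A_i$.

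The only genuine work is therefore bookkeeping: confirming that the real-field hypothesis activates $\kappa=1$, checking that $B\geq\max\{|b_1|,|b_2|,|b_3|\}$ and the $A_i$-conditions are exactly the specialisations of Matveev's, and carrying out the numerical evaluation of $C(3,1)$. I expect this last reconciliation — matching the exact form of Matveev's $C(n,\kappa)$ at $n=3$ with the round figure $1.4\cdot 10^{11}$ — to be the sole delicate point, since everything else is a verbatim transcription of the general theorem into the three-variable real setting in which the application will be made.
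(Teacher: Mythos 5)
Your overall route---quoting Matveev's general bound and specialising it to $n=3$ in the real case $\kappa=1$---is exactly the paper's route: the paper supplies no proof at all, stating the lemma as ``a special case of \cite[Corollary 2.3]{Matveev2000}'', so conceptually you and the paper agree. The problem is that the one step you yourself flag as delicate fails as you have set it up. The clean form you quote, with constant $1.4\cdot 30^{n+3}n^{4.5}$, is not Matveev's Corollary 2.3 itself but the Bugeaud--Mignotte--Siksek-style simplification of it, and at $n=3$ it evaluates to $1.4\cdot 30^{6}\cdot 3^{4.5}\approx 1.432\cdot 10^{11}$, which is \emph{larger} than the stated $1.4\cdot 10^{11}$. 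So nothing is ``absorbed into the displayed constant''; your bookkeeping proves the lemma only with a slightly weaker constant, and the claim that the numerical evaluation reconciles with $1.4\cdot 10^{11}$ is, as written, false.

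To actually reach $1.4\cdot 10^{11}$ you must go back to Matveev's Corollary 2.3 in its original form, where the bound is $C(n,\kappa)\,C_0\,W_0\,D^2\,A_1\cdots A_n$ with $C(3,1)=\frac{16}{3!}\,e^{3}\cdot 9\cdot 5\cdot 16^{4}\cdot\frac{3e}{2}\approx 6.5\cdot 10^{8}$ and with logarithmic factors of the shape $C_0=\log\left(e^{4.4n+7}n^{5.5}D^2\log(eD)\right)$ and $W_0=\log\left(1.5\,eBD\log(eD)\right)$. Note that this also undercuts your claim that the factors $D^2\log(eD)$ and $\log(eB)$ ``carry over verbatim'': the $D$- and $B$-dependence in the corollary is genuinely different, and one must verify the elementary inequality $C(3,1)\,C_0\,W_0\leq 1.4\cdot 10^{11}\log(eD)\log(eB)$ for all admissible $D$ and $B$. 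This does hold, with a comfortable margin (for instance at $D=6$, the relevant case in the paper, the left side is roughly $2\cdot 10^{10}\left(\log B+4.3\right)$ against $3.9\cdot 10^{11}\left(1+\log B\right)$ on the right), so your proof can be completed by inserting this explicit comparison. Alternatively, keep your simplified constant and restate the lemma with, say, $1.44\cdot 10^{11}$; that weaker form would still suffice for every application in the paper, since the constant only feeds into $C_1$ and $C_2$ with ample slack.
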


With Lemma \ref{lem:matveev} we will obtain huge bounds and we will reduce them using the Baker-Davenport reduction method and the LLL-algorithm.

Let us denote the distance to the nearest integer by $\|\cdot \|$. We phrase the Baker-Davenport reduction method in terms of logarithms because we will apply it in this form. The following lemma is an immediate variation of \cite[Lemma 5]{DujellaPetho1998}.

\begin{lem}[Baker-Davenport reduction] \label{lem:Baker-Davenport} 
Let $\gamma_0, \gamma_1 , \gamma_2, c, k$ be positive real numbers and let 
$u_1, u_2$ be integers bounded in absolute values by a positive number $M$.
Assume that
\[
	|\log \gamma_0 + u_1 \log \gamma_1 + u_2 \log \gamma_2|
	< c \cdot w^{-k}
\]
is satisfied for a positive number $w$.
Let $\frac{p}{q}$ be a convergent of the continued fraction of $\frac{\log \gamma_1}{\log \gamma_2}$. If
\[
	\eps :=\left\| \frac{\log \gamma_0}{\log \gamma_2} q \right\| 
		- M \left\| \frac{\log \gamma_1}{\log \gamma_2} q \right\|
\] 
is positive, then it follows that
\[
	w < \left( \frac{qc}{\eps |\log \gamma_2|} \right)^{1/k}.
\]
\end{lem}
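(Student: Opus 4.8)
The plan is to turn the three-term linear form into a one-dimensional inhomogeneous approximation problem and then invoke the best-approximation property of continued-fraction convergents. First I would divide the hypothesis $|\log\gamma_0 + u_1\log\gamma_1 + u_2\log\gamma_2| < c\,w^{-k}$ by $|\log\gamma_2|$ and multiply by $q$. Writing $\beta := \log\gamma_0/\log\gamma_2$ and $\vartheta := \log\gamma_1/\log\gamma_2$, this yields
\[
	\left| q\beta + u_1 q\vartheta + q u_2 \right| < \frac{qc}{|\log\gamma_2|}\, w^{-k}.
\]
Since $q u_2$ is an integer, passing to the distance to the nearest integer removes it, so that
\[
	\left\| q\beta + u_1 q\vartheta \right\| \le \left| q\beta + u_1 q\vartheta + q u_2 \right| < \frac{qc}{|\log\gamma_2|}\, w^{-k}.
\]

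The next step is to bound the left-hand side from below by $\eps$. Because $p/q$ is a convergent of $\vartheta$, the numerator $p$ is the nearest integer to $q\vartheta$, hence $|q\vartheta - p| = \|q\vartheta\|$. Subtracting the integer $u_1 p$ leaves the nearest-integer distance unchanged, so by the reverse triangle inequality for $\|\cdot\|$, together with $\|x\|\le|x|$ and $|u_1|\le M$,
\[
	\left\| q\beta + u_1 q\vartheta \right\|
	= \left\| q\beta + u_1(q\vartheta - p) \right\|
	\ge \|q\beta\| - M\|q\vartheta\| = \eps.
\]

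Combining the two displays gives $\eps < qc\,w^{-k}/|\log\gamma_2|$, and solving for $w$—which is legitimate precisely because $\eps$, $c$, $k$ are positive—produces the asserted bound $w < \left( qc/(\eps|\log\gamma_2|) \right)^{1/k}$. There is no genuinely hard step here: as the statement advertises, this is a bookkeeping variant of \cite[Lemma 5]{DujellaPetho1998}. The only two points requiring care are standard facts I would cite rather than reprove, namely that a convergent satisfies $|q\vartheta - p| = \|q\vartheta\|$ (so that the nearest-integer distance is actually attained at $p$), and that the positivity hypothesis on $\eps$ is exactly what makes the final chain of inequalities non-vacuous; if $\eps\le 0$ one obtains no information and must pass to a later convergent.
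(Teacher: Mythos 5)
Your proof is correct, and it is worth noting that the paper itself gives no proof of this lemma: it is stated as an immediate variation of \cite[Lemma 5]{DujellaPetho1998}, so your reconstruction is precisely the argument that citation stands in for (divide by $|\log\gamma_2|$, multiply by $q$, absorb the integer $qu_2$ by passing to $\|\cdot\|$, bound below by $\eps$, invert). Two refinements to your write-up. First, the one fact you defer to a citation, namely $|q\vartheta-p|=\|q\vartheta\|$ for a convergent $p/q$ of $\vartheta$, has a small edge case: it can fail for the zeroth convergent $p_0/q_0=a_0/1$ when the fractional part of $\vartheta$ exceeds $1/2$, so ``$p$ is the nearest integer to $q\vartheta$'' is not literally true for \emph{every} convergent. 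Second --- and this repairs the first point for free --- the convergent property is not needed anywhere in the deduction: for any integer $u_1$ and real $x$ one has $\|u_1x\|\le |u_1|\,\|x\|$ (if $m$ is a nearest integer to $x$, then $u_1m$ is an integer at distance $|u_1|\,|x-m|$ from $u_1x$), so in your notation $\|q\beta+u_1q\vartheta\|\ge\|q\beta\|-\|u_1q\vartheta\|\ge\|q\beta\|-M\|q\vartheta\|=\eps$ directly, with no mention of $p$. In other words, the hypothesis that $p/q$ is a convergent plays no logical role in the lemma; its only purpose is practical, since choosing $q$ as a convergent denominator makes $\|q\vartheta\|$ of size roughly $1/q$ and thus gives $\eps$ a realistic chance of being positive, exactly as the paper exploits in the proof of Lemma~\ref{lem:solutions10n1000}.
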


The next lemma describes the way in which we will apply the LLL-algorithm to find good lower bounds for linear forms in logarithms. It is an immediate variation of \cite[Lemma VI.1]{Smart1998}.

\begin{lem}[LLL reduction]\label{lem:LLL}
Let $\gamma_1,\gamma_2,\gamma_3$ be positive real numbers and $x_1,x_2,x_3$ integers and let
\[
	0 \neq |\Lambda|
	:= |x_1 \log \gamma_1 + x_2 \log \gamma_2 + x_3 \log \gamma_3|.
\]
Assume that the $x_i$ are bounded in absolute values by some constant $M$ and choose a constant $C>M^3$. 
Consider the matrix
\[
	A= \begin{pmatrix}
	1 & 0 & 0 \\
	0 & 1 & 0 \\
	[C \log \gamma_1] & [C \log \gamma_2] &[C \log \gamma_3]
	\end{pmatrix},
\]
where $[x]$ denotes the nearest integer to $x$. The columns of $A$ form a basis of a lattice. Let $B$ be the matrix that corresponds to the LLL-reduced basis and let $B^*$ be the matrix corresponding to the Gram-Schmidt basis constructed from $B$. Let $c$ be the Euclidean norm of the smallest column vector of $B^*$ and set $S:=2M^2$ and $T:=(1+3M)/2$.
If $c^2 > T^2 + S$, then 
\[
	|\Lambda|
	> \frac{1}{C}\left( \sqrt{c^2-S} - T \right).
\]
\end{lem}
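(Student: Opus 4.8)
The plan is to exploit the single classical fact that underlies every LLL-type gap principle: in any lattice, the Gram--Schmidt data of a basis furnishes a lower bound for the length of the shortest nonzero vector. Write $L$ for the lattice spanned by the columns of $A$ and observe that for $\mathbf{x}=(x_1,x_2,x_3)^T\in\Z^3$ the lattice point $A\mathbf{x}$ has coordinates $x_1$, $x_2$ and
\[
	y:=x_1[C\log\gamma_1]+x_2[C\log\gamma_2]+x_3[C\log\gamma_3].
\]
Since $|[C\log\gamma_i]-C\log\gamma_i|\le\tfrac12$ for each $i$, the last coordinate is a near copy of $C\Lambda$: writing $\delta:=y-C\Lambda$ I get
\[
	|\delta|\le\tfrac12(|x_1|+|x_2|+|x_3|)\le\tfrac{3M}{2}<T.
\]
Because $A$ is invertible (its columns form a basis) and $\Lambda\neq0$ forces $\mathbf{x}\neq\mathbf{0}$, the vector $A\mathbf{x}$ is a nonzero element of $L$.

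The key step I would isolate as a sublemma is the Gram--Schmidt bound: if $B=(b_1,b_2,b_3)$ is any basis of $L$ with Gram--Schmidt orthogonalisation $B^*=(b_1^*,b_2^*,b_3^*)$, then every nonzero $v\in L$ satisfies $\|v\|\ge\min_i\|b_i^*\|=c$ in the Euclidean norm. To prove it I would write $v=\sum_i z_ib_i$ with $z_i\in\Z$ not all zero, let $k$ be the largest index with $z_k\neq0$, and expand via $b_i=b_i^*+\sum_{j<i}\mu_{ij}b_j^*$. The coefficient of $b_k^*$ in $v$ then equals $z_k$, so by orthogonality $\|v\|^2\ge z_k^2\|b_k^*\|^2\ge\|b_k^*\|^2\ge c^2$, using the integrality $z_k^2\ge1$. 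Applying this with the LLL-reduced basis $B$ yields $\|A\mathbf{x}\|\ge c$.

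It remains to combine the pieces. From $\|A\mathbf{x}\|^2=x_1^2+x_2^2+y^2\ge c^2$ together with $x_1^2+x_2^2\le 2M^2=S$ I obtain $y^2\ge c^2-S$, which is positive precisely because of the hypothesis $c^2>T^2+S$; hence $|y|\ge\sqrt{c^2-S}$. Since
\[
	C|\Lambda|=|y-\delta|\ge|y|-|\delta|>|y|-T\ge\sqrt{c^2-S}-T,
\]
dividing by $C$ gives the asserted bound. I would emphasise that the LLL-reduction itself plays no logical role in the inequality: the estimate is valid for the Gram--Schmidt data of an arbitrary basis, and reduction enters only to make $c$ (hence the gap $c^2-S>T^2$) as large as possible, so that the hypothesis can actually be met in applications. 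The only genuinely delicate point is the sublemma, where the subtlety is pure bookkeeping—identifying the top index $k$ so that $z_k^2\ge1$ may be invoked; the rounding estimate for $\delta$ and the separation of the two ``free'' coordinates from the approximating coordinate $y$ are then routine.
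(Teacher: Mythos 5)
Your proof is correct and complete: the rounding estimate $|\delta|\le \tfrac{3M}{2}<T$, the Gram--Schmidt sublemma giving $\|v\|\ge c$ for every nonzero lattice vector, and the final combination $C|\Lambda|\ge |y|-|\delta|>\sqrt{c^2-S}-T$ are exactly the standard argument behind this statement. The paper itself gives no proof---it simply cites Lemma VI.1 of Smart's book, whose proof runs along these same lines---so your write-up matches the intended argument, including the apt observation that the LLL-reduction plays no logical role in the inequality (which holds for any basis) and serves only to make $c$ large enough that the hypothesis $c^2>T^2+S$ can be met.
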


Finally, let us mention some inequalities that will be helpful. We will frequently use the fact that
\[
	|\log (1+x)| < 2 |x|
\]
for all real $x$ with $|x|< 0.5$. 
Also, we have
\[
	\log (a+x) 
	< \log a + \frac{x}{a}
\]
for all $a\geq 1$ and $x>0$.
Moreover, we will use the following lemma.

\begin{lem}\label{lem:ineqLog}
Let $x,c,a$ be positive real numbers with $x\geq 1$, $a\geq 2$ and
\begin{equation}\label{eq:lem:ineqLog1}
	x < c( a + \log x).
\end{equation}
Then
\[
	x < 2c ( a + \log c).
\]
\end{lem}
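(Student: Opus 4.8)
The plan is to eliminate the transcendental term $\log x$ from the hypothesis \eqref{eq:lem:ineqLog1} by replacing it with a linear function of $x$, turning the constraint into an ordinary linear inequality in $x$ that can be solved explicitly. The tool I would use is the elementary concavity (tangent-line) bound
\[
	\log x \le \frac{x}{t} + \log t - 1,
\]
which holds for all real $x,t>0$: the function $x\mapsto x/t+\log t-1-\log x$ has a unique minimum at $x=t$, where it equals $0$. Since $c>0$, I am free to apply this with the specific parameter choice $t=2c$, chosen so that the coefficient of $x$ produced on the right-hand side is exactly $\tfrac12$.

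Carrying this out, from $x<c(a+\log x)$ together with the displayed bound at $t=2c$ I would get
\[
	x < c\left(a+\frac{x}{2c}+\log(2c)-1\right)
	= ca + \frac{x}{2} + c\log(2c) - c.
\]
Moving $x/2$ to the left and multiplying by $2$ then yields
\[
	x < 2c\bigl(a+\log(2c)-1\bigr) = 2c(a+\log c) + 2c(\log 2 - 1),
\]
after writing $\log(2c)=\log 2+\log c$.

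To finish, I would observe that $\log 2 < 1$, so the correction term $2c(\log 2 - 1)$ is negative and may simply be dropped, giving the claimed inequality $x<2c(a+\log c)$. There is no serious obstacle here: the only real decision is the value of $t$, and the choice $t=2c$ is exactly what generates the factor $2$ in the conclusion while leaving a residual constant of the correct (negative) sign. A contradiction-based argument using the monotonicity of $x\mapsto x-c\log x$ on $(c,\infty)$ is also possible, but it forces a case analysis according to the sign of $a+\log c$ and is messier when $c$ is small; the tangent-line approach sidesteps this entirely. I note in passing that the hypotheses $x\ge 1$ and $a\ge 2$ are not needed for the derivation itself and serve only to ensure that the resulting bound is a non-trivial positive quantity.
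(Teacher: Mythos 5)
Your proof is correct, and it takes a genuinely different route from the paper's. The paper argues by bootstrapping through the logarithm of the hypothesis: from $x < c(a+\log x)$ it deduces $\log x < \log c + \log(a+\log x)$, then applies the inequality $\log(a+u) < \log a + u/a$ (stated earlier in Section 2) with $u=\log x$ --- this is where $x\ge 1$ is used, to have $u\ge 0$ --- and the hypothesis $a\ge 2$ to get $\log x < \log c + \log a + \tfrac{1}{2}\log x$, hence $\log x < 2(\log c+\log a) < 2\log c + a$; substituting this bound on $\log x$ back into the original hypothesis gives $x < c(a + 2\log c + a) = 2c(a+\log c)$. You instead linearize $\log x$ in $x$ once and for all, via the tangent line at $t=2c$, and solve the resulting linear inequality in a single step. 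Both arguments ultimately rest on concavity of the logarithm (the paper's inequality $\log(a+u)\le \log a + u/a$ is itself a tangent-line bound, taken at $a$ rather than at $2c$), but the decompositions are different, and yours buys two concrete things: it dispenses with the side conditions $x\ge 1$ and $a\ge 2$ entirely, as you correctly note, and it yields the marginally sharper conclusion $x < 2c(a+\log c+\log 2-1)$. What the paper's route buys is mainly economy within its own context: it reuses an inequality already recorded in the auxiliary section and produces the intermediate estimate $\log x < 2\log c + a$ along the way; neither feature is essential, and your argument would serve the lemma's single application (bounding $\log|y|$ in Lemma 9) equally well.
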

\begin{proof}
If \eqref{eq:lem:ineqLog1} is satisfied, then taking the logarithm and using $a\geq 2$ we see that
\[
	\log x 
	< \log c + \log (a + \log x)
	< \log c + \log a + \frac{\log x}{2}.
\]
This implies $\log x < 2(\log c + \log a) < 2\log c + a$ and plugging into \eqref{eq:lem:ineqLog1} we obtain the desired result.
\end{proof}

\section{Set up}\label{sec:setup}

In this section we first check solutions for small $n$ as well as $|y|\leq 1$. 

Then
we consider the polynomial $f_n(X)=X(X-F_n)(X-2^n)-1$ and use a result by Thomas \cite{Thomas1979} to find systems of fundamental units in the corresponding orders. We estimate the roots $\alpha^{(i)}$ of $f_n$, the units $\alpha^{(i)}-F_n$ and $\alpha^{(i)}-2^n$ and the regulator. Then for a solution $(x,y)$ we estimate the units $\beta^{(i)}=x-\alpha^{(i)}y$. Finally, for a representation $\beta=\pm  \eta_1^{u_1}\eta_2^{u_2}$ we bound the exponents $u_1,u_2$.

\medskip

For small $n$ we solve Equation \eqref{eq:main} using the functions \verb|thueinit| and \verb|thue| of PARI/GP~\cite{PARI}. For $n\leq 28$ this takes only a couple of minutes on a usual PC and we have

\begin{lem}\label{lem:small-sols}
For $n\leq 28$ the only solutions $(x,y)\in \Z^2$ to Equation \eqref{eq:main} are those given in Theorem \ref{thm:main}.
\end{lem}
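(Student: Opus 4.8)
The plan is to treat Lemma \ref{lem:small-sols} as a finite verification: for each of the $28$ values $n\in\{1\bb 28\}$ the quantities $F_n$ and $2^n$ are explicit integers, so \eqref{eq:main} becomes, for each fixed $n$, a concrete pair of Thue equations
\[
	F(X,Y):=X(X-F_nY)(X-2^nY)-Y^3=\pm 1
\]
with fixed integer coefficients. First I would observe that $F$ is homogeneous of odd degree $3$, so $F(-X,-Y)=-F(X,Y)$; hence $(x,y)$ solves $F=-1$ if and only if $(-x,-y)$ solves $F=+1$, and it suffices to solve the single equation $F(X,Y)=1$ for each $n$ and then adjoin the negatives of the solutions (alternatively one simply calls the solver for both signs).

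Second, before invoking a Thue solver one must confirm that each $F$ is genuinely an irreducible binary cubic form. Since $f_n(X)=X(X-F_n)(X-2^n)-1$ is monic with constant term $f_n(0)=-1$, its only possible rational roots are $\pm 1$; a direct evaluation shows $f_n(\pm1)\neq 0$ for every $n\le 28$, so $f_n$ is irreducible over $\Q$ and $F$ is an irreducible form of degree $3$. Thus \eqref{eq:main} is, for each fixed $n$, a legitimate Thue equation to which the classical finiteness and effectivity theory applies.

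Third, I would invoke a certified Thue solver, namely PARI/GP's \verb|thueinit| and \verb|thue|, which implement the standard Baker-type procedure (bounding the solutions via a lower bound for linear forms in logarithms, then reducing the bound by continued-fraction and LLL techniques) and return a provably complete list of integer solutions. Concretely, for each $n\le 28$ one passes the polynomial $f_n$ to \verb|thueinit|, records the output of \verb|thue| for the right-hand sides $\pm1$, and compares the resulting solution set with the list \eqref{eq:trivial_sols} (together with the exceptional solutions $\pm(7,3)$ and $\mp(1,2)$ asserted for $n=1$ and $n=2$). Matching these lists for all $n\le 28$ completes the proof.

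The computation itself is routine and, as noted in the text, runs in a few minutes, so there is no serious mathematical obstacle. The only point requiring care is rigour: the argument relies on the solver producing a \emph{complete} solution set rather than a partial one. To keep the argument trustworthy one can cross-check the output against an independent implementation (for instance Magma's Thue-equation machinery, or a direct application of the Bilu--Hanrot algorithm) and verify by hand that every listed pair indeed satisfies \eqref{eq:main}; the latter is a trivial substitution for each of the finitely many candidate solutions.
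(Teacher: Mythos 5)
Your proposal is correct and takes essentially the same approach as the paper: the paper's proof consists precisely of solving \eqref{eq:main} for each $n\leq 28$ with PARI/GP's \texttt{thueinit} and \texttt{thue} and comparing the output with the solutions listed in Theorem~\ref{thm:main}. The extra points you add (irreducibility of $f_n$ via the rational root test, the odd-degree sign symmetry, and independent cross-checking) are sound refinements but not part of the paper's brief computational argument.
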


From now on, we assume that $n> 28$.
In the course of the paper we will be doing a lot of estimations. 
In order to keep track of constants, we use the following $L$-notation: For real functions $f(n), g(n)$ with $g>0$ we write $f(n)=L(g(n))$ if $|f(n)|\leq g(n)$ for all $n \geq 29$.
Note that after finishing Section \ref{sec:uBy-n1000} we will assume $n>1000$. Then statements involving the $L$-notation will only need to hold for $n>1000$.

\begin{lem}\label{lem:y-small}
The solutions $(x,y)\in \Z^2$ to Equation \eqref{eq:main} with $|y|\leq 1$ are precisely the trivial solutions given in \eqref{eq:trivial_sols}.
\end{lem}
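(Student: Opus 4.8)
The plan is to handle the three boundary values $y\in\{-1,0,1\}$ one at a time by substituting each into \eqref{eq:main} and reducing to an equation in $X$ alone. For $y=0$ the equation collapses to $X^3=\pm 1$, whose only integer solutions are $X=\pm 1$, producing exactly $(\pm 1,0)$. Before treating $y=\pm 1$, I would record the symmetry of the form $F(X,Y)=X(X-F_nY)(X-2^nY)-Y^3$: a direct computation gives $F(-X,-Y)=-F(X,Y)$, so the substitution $(x,y)\mapsto(-x,-y)$ interchanges the equations $F=1$ and $F=-1$. Consequently it suffices to solve the case $y=1$ and then read off the case $y=-1$ by applying this symmetry.

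For $y=1$ the equation becomes $X(X-F_n)(X-2^n)-1=\pm 1$, that is $X(X-F_n)(X-2^n)\in\{0,2\}$. When the product is $0$ we get $X\in\{0,F_n,2^n\}$, yielding the trivial solutions $(0,1),(F_n,1),(2^n,1)$. The only substantive point is to exclude the possibility $X(X-F_n)(X-2^n)=2$. Here I would use the following elementary observation: the three integer factors $a=X$, $b=X-F_n$, $c=X-2^n$ have pairwise differences $|a-b|=F_n$, $|a-c|=2^n$ and $|b-c|=2^n-F_n$, each of which is at least $F_n\ge F_{29}$ under the standing assumption $n>28$. On the other hand, if $abc=2$, then $|a|,|b|,|c|$ are positive integers with product $2$, so two of them equal $1$; but then those two factors would differ by at most $2$, contradicting that any two of $a,b,c$ differ by more than $2$. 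Hence $X(X-F_n)(X-2^n)=2$ has no integer solution, and $y=1$ contributes precisely the three trivial solutions.

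Finally, applying the symmetry $(x,y)\mapsto(-x,-y)$ to the solutions found for $y=1$ yields exactly the solutions with $y=-1$, namely $(0,-1),(-F_n,-1),(-2^n,-1)$. (Equivalently, substituting $y=-1$ directly gives $X(X+F_n)(X+2^n)\in\{0,-2\}$, and the same difference argument rules out the value $-2$.) Collecting the cases $y=-1,0,1$ reproduces exactly the list \eqref{eq:trivial_sols}.

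The whole argument is short, and I expect the only step needing any care to be the exclusion of the product-equals-$\pm 2$ cases; everything else is a routine substitution. That exclusion is itself easy once one notes that a product of three integers equal to $\pm 2$ forces two of the factors into $\{-1,1\}$, which is incompatible with the enormous gaps $F_n$ and $2^n$ between the factors when $n>28$.
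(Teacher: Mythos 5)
Your proposal is correct and follows essentially the same route as the paper: substitute $y=0$ and $|y|=1$, read off the trivial solutions when the product $x(x\mp F_n)(x\mp 2^n)$ vanishes, and rule out the product being $\pm 2$ by noting that a product of three integers of absolute value $2$ forces two factors of absolute value $1$, which is incompatible with the large gaps ($F_n$ and $2^n$) between the factors. The only difference is cosmetic: you reduce $y=-1$ to $y=1$ via the symmetry $F(-X,-Y)=-F(X,Y)$, while the paper treats both signs at once with $\pm$ notation.
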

\begin{proof}
If $y=0$, then \eqref{eq:main} becomes $x^3=\pm 1$, which yields the solution $ (\pm 1,0)$. If $|y|=1$, then either $x(x\pm F_n)(x\pm 2^n)=0$ or $x(x\pm F_n)(x\pm 2^n)=\pm 2$. The first case yields the solutions  $(0,\pm 1), \mp(F_n,1), \mp(2^n,1)$. In the second case we see that for $n\geq 3$ the difference between the factors $x$ and $x\pm 2^n$ is at least $8$ so the product cannot be a divisor of 2.
\end{proof}

From now on, we are only interested in solutions $(x,y)$ with $|y|\geq 2$.

Consider the polynomial corresponding to \eqref{eq:main} at $Y=1$:
\[
	f_n(X):= X (X- F_n) (X- 2^n) - 1.
\]
Polynomials of this form, and in particular unit structures of related orders, have been studied e.g. in \cite{BernsteinHasse1969}, \cite{Stender1972} and \cite{Thomas1979}. The following lemma is due to Thomas~\cite[Theorem 3.9]{Thomas1979}.

\begin{lem}[Thomas]\label{lem:thomas}
Let $f(X)=X(X-r)(X-s)-1$ with integers $2\leq r \leq s-2$. Then $f(X)$ is irreducible with three distinct real roots. If $\alpha$ is a zero of $f(X)$, then $\{\alpha, \alpha - r \}$ is a fundamental system of units of the order $\Z[\alpha]$.
\end{lem}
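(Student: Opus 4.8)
The statement bundles four claims, which I would establish in increasing order of difficulty.

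\emph{Irreducibility and the three real roots.} Since $f$ is monic with constant term $-1$, any rational zero is an integer dividing $1$, so only $\pm1$ are candidates. From $r\ge2$ and $s\ge r+2\ge4$ one gets $f(1)=(r-1)(s-1)-1\ge2>0$ and $f(-1)=-(r+1)(s+1)-1<0$, so $f$ has no rational root and, being cubic, is irreducible over $\Q$. For the real roots I would write $f=g-1$ with $g(X)=X(X-r)(X-s)$ and compare signs at $0,\ r-1,\ r,\ s$ together with the behaviour at $\pm\infty$: one has $f(0)=f(r)=f(s)=-1<0$, while $f(r-1)=(r-1)(s-r+1)-1\ge2>0$ and $f(X)\to+\infty$. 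Hence $f$ changes sign on $(0,r-1)$, on $(r-1,r)$ and on $(s,+\infty)$, giving three distinct real roots, which I order as $\alpha^{(1)}<\alpha^{(2)}<\alpha^{(3)}$ with $\alpha^{(1)},\alpha^{(2)}\in(0,r)$ and $\alpha^{(3)}>s$. Comparing $g$ near each root with its dominant linear factor yields the refined estimates $\alpha^{(1)}\approx(rs)^{-1}$, $\alpha^{(2)}\approx r$, $\alpha^{(3)}\approx s$, which I will need below.

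\emph{The units.} Evaluating the factorisation at $\alpha$, the relation $f(\alpha)=0$ reads $\alpha(\alpha-r)(\alpha-s)=1$. Thus each of $\alpha$, $\alpha-r$, $\alpha-s$ is a unit of $\Z[\alpha]$, the product of the other two being its inverse; moreover $\alpha-s=(\alpha(\alpha-r))^{-1}$ already lies in the group generated by $\alpha$ and $\alpha-r$, so it suffices to work with $\eta_1:=\alpha$ and $\eta_2:=\alpha-r$.

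\emph{Fundamentality.} The field $\Q(\alpha)$ is totally real cubic, so $\Z[\alpha]^*$ has rank $2$ and it remains to show that $\eta_1,\eta_2$ generate it modulo $\{\pm1\}$. Writing $l_i=\log|\eta_1^{(i)}|$ and $m_i=\log|\eta_2^{(i)}|$, the relations $\prod_i\eta_1^{(i)}=\prod_i\eta_2^{(i)}=1$ give $\sum_i l_i=\sum_i m_i=0$, and the regulator of $\langle\eta_1,\eta_2\rangle$ is $R_0=|l_1m_2-l_2m_1|$. Plugging in the root estimates, I expect
\[
	R_0\approx\log r\,\log(s-r)+\log r\,\log s+\log s\,\log(s-r)\le\tfrac12\bigl(\log r+\log s+\log(s-r)\bigr)^2,
\]
up to lower-order terms; in particular $R_0\neq0$, so $\eta_1,\eta_2$ are independent and the index $m:=[\Z[\alpha]^*:\langle-1,\eta_1,\eta_2\rangle]$ is finite with $R_0=m\,R$, where $R$ is the regulator of the order. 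To force $m=1$ I would bound $R$ from below. Since $\mathrm{disc}(f)=\prod_{i<j}(\alpha^{(i)}-\alpha^{(j)})^2\approx(rs(s-r))^2$, a known lower bound for regulators of totally real cubic fields of the shape $R\ge\tfrac1{16}\bigl(\log(\mathrm{disc}/4)\bigr)^2$ gives $R\gtrsim\tfrac14(\log r+\log s+\log(s-r))^2$, which matches the bound for $R_0$ closely enough to yield $m<2$, hence $m=1$.

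\emph{The main obstacle.} The crux is exactly this matching of constants: the crude asymptotics above only squeeze $m$ below $2$, so every estimate must be made effective and uniform over the whole range $2\le r\le s-2$, carrying all the lower-order terms rather than discarding them. Two points need extra care. First, the regulator bound refers to the field $K=\Q(\alpha)$, whereas $\mathrm{disc}(f)=d_K\cdot[\O_K:\Z[\alpha]]^2$ may exceed $d_K$; I would therefore either argue that the relevant index is trivial (so that $d_K=\mathrm{disc}(f)$) or invoke a regulator lower bound valid for the order itself. Second, even after the estimates one may be left with the borderline possibility $m=2$, which I would exclude by hand: a unit $\delta$ with $\delta^2\in\langle-1,\eta_1,\eta_2\rangle$ would have conjugates whose sizes are forced by the $(rs)^{-1}$, $r$, $s$ pattern of the $\alpha^{(i)}$, and one checks that this is incompatible with $\delta$ being an algebraic integer. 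This effective, uniform estimation together with the elimination of the small index is where essentially all the work lies.
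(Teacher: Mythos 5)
The first thing to note is that the paper does not prove this lemma at all: it is imported verbatim from Thomas \cite[Theorem 3.9]{Thomas1979}, so your attempt stands or falls on its own merits. The elementary parts are correct and complete: the rational-root test with $f(1)=(r-1)(s-1)-1\ge 2$ and $f(-1)=-(r+1)(s+1)-1<0$ gives irreducibility; the sign pattern $f(0)=f(r)=f(s)=-1$, $f(r-1)=(r-1)(s-r+1)-1\ge 2$, $f(X)\to+\infty$ gives three distinct real roots; and $\alpha(\alpha-r)(\alpha-s)=1$ makes all three factors units of $\Z[\alpha]$, with $\alpha-s$ lying in the group generated by the other two.

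The fundamentality step, which is the entire content of the lemma, remains a programme rather than a proof, and its gaps are genuine. (i) Your constants cannot give $m<2$ even asymptotically: $R_0\lesssim\frac12(A+B+C)^2$ against $R\gtrsim\frac14(A+B+C)^2$, with $A=\log r$, $B=\log s$, $C=\log(s-r)$, gives the ratio $2$ exactly, so only $m\le 2$; what you need is the sharp inequality $AB+AC+BC\le\frac13(A+B+C)^2$, which brings the asymptotic ratio down to $4/3$. (ii) Even with that repair the estimate is not uniform over $2\le r\le s-2$: for $r=2$, $s=4$ one computes $\operatorname{disc}(f)=229$ and $R_0\approx 2.35$, while $\frac1{16}\bigl(\log(229/4)\bigr)^2\approx 1.02$, a ratio of about $2.3$; so at the bottom of the range the regulator comparison does not even exclude index $2$, and a separate treatment of small $(r,s)$ is unavoidable. (iii) The promised by-hand exclusion of $m=2$ is asserted but never performed, and it is not a formality: positivity of $\delta^2$ at the three real embeddings does kill every square class except $\delta^2=\alpha$ (the conjugates of $\alpha$ are all positive, those of $\alpha-r$ and of $\pm\alpha(\alpha-r)$ are not), but ruling out $\delta^2=\alpha$ amounts, via the factorisation $f(X^2)=-g(X)g(-X)$ for the minimal polynomial $g(X)=X^3+aX^2+bX+c$ of $\delta$, to showing that the system $a^2-2b=r+s$, $b^2-2ac=rs$, $c=\pm1$ has no integer solutions; that requires a genuine size argument on $a$ and $b$, uniform in $r,s$, and this is exactly where the substance of Thomas' theorem lies. (iv) Your discriminant worry, by contrast, has a clean resolution that you should carry out rather than list as an option: any non-torsion unit $\delta\in\Z[\alpha]$ generates the field (a cubic field has no quadratic subfield), so $\Z[\delta]\subseteq\Z[\alpha]$ and hence $\operatorname{disc}(\Z[\delta])\ge\operatorname{disc}(f)$; thus the Remak--Cusick argument can be run inside the order with $\operatorname{disc}(f)$ itself, and the field discriminant $d_K$ never enters. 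As written, then, the first two assertions of the lemma are established, but the fundamental-pair assertion is not.
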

Note that since in the lemma we have $\alpha (\alpha - r) (\alpha - s) =1$, any two of the three units $\alpha, \alpha-r, \alpha - s$ form a fundamental system of units of $\Z[\alpha]$.

Let us denote the roots of $f_n$ by $\alpha^{(i)}$ $(i=1,2,3)$ with $\alpha^{(1)}<\alpha^{(2)}<\alpha^{(3)}$.
Moreover, let
$G_1:=0$, $G_2:=F_n$ and $G_3=2^n$, so that
\[
	f_n(X)= (X-G_1) (X- G_2) (X- G_3) - 1.
\] 
Now set 
\[
	\eta_i^{(k)}:=\alpha^{(k)} - G_i \qquad (i,k=1,2,3).
\] 
Then by Lemma \ref{lem:thomas}, any two of $\eta_1^{(k)}, \eta_2^{(k)},\eta_3^{(k)}$ form a system of fundamental units of $\Z[\alpha^{(k)}]$.
Finally, let us denote the logarithms by
\[
	l_i^{(k)}:=\log |\eta_i^{(k)}|.
\]
Note that all these terms implicitly depend on $n$.
We estimate the units $\eta_i^{(k)}$ and in particular the roots of $f_n$:

\begin{lem}\label{lem:roots-estimate}
The roots of $f_n$ can be estimated by
\[
	\alpha^{(i)}= G_i + L(0.5^n)
	\qquad (i=1,2,3).
\]
\end{lem}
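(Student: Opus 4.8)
We need to show the roots satisfy $\alpha^{(i)} = G_i + L(0.5^n)$, i.e. each root is within $0.5^n$ of $G_i \in \{0, F_n, 2^n\}$.

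**What do we know?** $f_n(X) = X(X-F_n)(X-2^n) - 1$. The "$-1$" is a small perturbation. Without it, roots would be exactly $0, F_n, 2^n$.

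**Strategy.** Show $f_n$ takes opposite signs at points near each $G_i$.

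Let me think. The roots of $X(X-F_n)(X-2^n)$ are $0, F_n, 2^n$. Adding $-1$ shifts roots slightly.

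At $X = G_i$: $f_n(G_i) = 0 \cdot (\ldots) - 1 = -1$ (since one factor vanishes). So $f_n(G_i) = -1 < 0$ for each $i$.

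Near root 1 ($G_1 = 0$): Consider $f_n$ at $X = \pm 0.5^n$.
- $f_n(0) = -1 < 0$.
- We want to find nearby points where $f_n > 0$.

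Let me compute $f_n(X)$ for $X$ near $0$. For small $X$:
$$f_n(X) = X(X-F_n)(X-2^n) - 1 \approx X \cdot F_n \cdot 2^n - 1$$
(since $(X-F_n)(X-2^n) \approx F_n \cdot 2^n$ for small $X$, taking signs: $(X - F_n) \approx -F_n$, $(X-2^n) \approx -2^n$, product $\approx F_n 2^n$).

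So $f_n(X) \approx F_n 2^n X - 1$. This is zero near $X \approx 1/(F_n 2^n)$, which is tiny.

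**The plan is as follows.** To establish $\alpha^{(i)} = G_i + L(0.5^n)$, I would locate each root by exhibiting a sign change of $f_n$ on an interval of radius $0.5^n$ around $G_i$. Since the three roots are real, distinct, and ordered (by Lemma \ref{lem:thomas} applied with $r = F_n$, $s = 2^n$, valid for $n > 28$), it suffices to show that each interval $[G_i - 0.5^n,\, G_i + 0.5^n]$ contains a root; the ordering then forces the $i$-th root to lie in the $i$-th interval.

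\medskip
\noindent\textbf{Setting up the sign changes.} First I note that $f_n(G_i) = -1 < 0$ for each $i$, since the factor $(X - G_i)$ vanishes at $X = G_i$. It therefore remains to produce, near each $G_i$, a point where $f_n$ is positive. The key observation is that for $X$ close to $G_i$ the two non-vanishing factors are essentially constant and their product is large: writing $X = G_i + t$ with $|t|\leq 0.5^n$, one of the factors equals $t$ while the other two are close to $G_i - G_j$ and $G_i - G_k$ (for $\{i,j,k\} = \{1,2,3\}$). For instance near $G_1 = 0$ we have $(X - F_n)(X - 2^n) = F_n 2^n + L(\text{small})$, so $f_n(G_1 + t) = F_n 2^n\, t - 1 + L(\text{small})$. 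Choosing $t = +0.5^n$ makes the leading term $F_n 2^n \cdot 0.5^n = F_n \cdot (2/2)^n \cdot \ldots$ enormous compared to $1$, forcing $f_n(G_1 + 0.5^n) > 0$. The analogous computations near $G_2 = F_n$ and $G_3 = 2^n$ use $|G_2 - G_1||G_2 - G_3| = F_n(2^n - F_n)$ and $|G_3 - G_1||G_3 - G_2| = 2^n(2^n - F_n)$ respectively, both of which dwarf the constant $1$ for $n > 28$.

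\medskip
\noindent\textbf{Carrying out the estimates and the main obstacle.} Concretely, I would fix a suitable sign $\sigma_i \in \{\pm 1\}$ for each $i$ so that moving from $G_i$ to $G_i + \sigma_i 0.5^n$ makes the product of the two non-vanishing factors positive, and then bound that product below by a quantity much larger than $1$. The required inequality in each case reduces to a comparison of the form $\tfrac{1}{2}^n \cdot (\text{product of two gaps}) > 1$, which is where the hypothesis $n > 28$ and the rough sizes $F_n \approx \phi^n/\sqrt{5}$, $2^n$ enter; in every case the product of gaps exceeds $2^n$, so the left-hand side exceeds $(2/2)^n = 1$ with substantial room to spare once the lower-order error terms $L(\cdot)$ from replacing $X$ by $G_i$ in the constant factors are absorbed. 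I expect the main obstacle to be purely bookkeeping: one must check that the error introduced by approximating the two constant factors at $X = G_i + \sigma_i 0.5^n$ rather than at $X = G_i$ (an error of relative size $O(0.5^n / F_n)$) is genuinely dominated, so that the sign of $f_n$ at the endpoint is unambiguous. This is routine for $n > 28$ but requires care in the case $i = 2$, where the two gaps $F_n$ and $2^n - F_n$ are the most disparate in magnitude. Once all three sign changes are established, the intermediate value theorem together with the ordering $\alpha^{(1)} < \alpha^{(2)} < \alpha^{(3)}$ yields $|\alpha^{(i)} - G_i| < 0.5^n$, i.e. $\alpha^{(i)} = G_i + L(0.5^n)$, as claimed.
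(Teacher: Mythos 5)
Your proposal is correct and follows essentially the same route as the paper: evaluate $f_n$ at each $G_i$ (where it equals $-1$) and at a point shifted by $\pm 0.5^n$ in the appropriate direction (where the product of the three factors is positive and of size roughly $F_n$ or $2^n - F_n$, dwarfing the constant $1$), then invoke the intermediate value theorem and the ordering of the three disjoint intervals to identify $\alpha^{(i)}$ with $G_i + L(0.5^n)$. The paper's proof is just a terser version of this, choosing the shifts $+0.5^n$, $-0.5^n$, $+0.5^n$ for $i=1,2,3$ exactly as your sign analysis dictates.
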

\begin{proof}
Note that $f_n(0)=-1$ and 
\[
	f_n(0.5^n)
	= 0.5^n(0.5^n - F_n)(0.5^n - 2^n)-1 
	>0.
\]
Therefore, a zero must lie between $0$ and $0.5^n$, i.e. one of the zeros can be estimated by $G_1+L(0.5^n)$.

Similarly, $f_n(F_n)=f_n(2^n)=-1$ and one can check that $f_n(F_n-0.5^n)>0$ and $f_n(2^n + 0.5^n)>0$. Thus, the other two roots are given by $G_2 + L(0.5^n)$ and $G_3+L(0.5^n)$.
\end{proof}

\begin{lem}\label{lem:lik}
We have
\[
	l_i^{(k)}=
	\begin{cases}
	n \log 2 + L(0.81^n) & \text{if } i\neq k \text{ and } \max\{i,k\}=3,\\
	n \log \phi - \log \sqrt{5} + L(2\cdot 0.81^n) & \text{if } i\neq k \text{ and } \max\{i,k\}=2,\\
	-n (\log \phi + \log 2) + \log \sqrt{5} + L(3\cdot 0.81^n) & 	\text{if } i=k \in\{1,2\},\\
	-2n\log 2 + L(2\cdot 0.81^n) & \text{if } i=k=3.
	\end{cases}
\]
\end{lem}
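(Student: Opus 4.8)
The plan is to separate the nine quantities $l_i^{(k)}$ into the six \emph{off-diagonal} terms ($i\neq k$), which are large and positive, and the three \emph{diagonal} terms ($i=k$), which are large and negative, and to treat the two groups by different mechanisms. The diagonal terms I would reduce to the off-diagonal ones via the norm relation: since $\alpha^{(k)}$ is a root of $f_n$, we have $\eta_1^{(k)}\eta_2^{(k)}\eta_3^{(k)}=\alpha^{(k)}(\alpha^{(k)}-F_n)(\alpha^{(k)}-2^n)=1$, so taking logarithms of absolute values gives $l_1^{(k)}+l_2^{(k)}+l_3^{(k)}=0$ for each $k$. Hence each $l_k^{(k)}$ equals minus the sum of the two off-diagonal terms in the same column, and it suffices to estimate the off-diagonal terms directly.

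For an off-diagonal term I would start from Lemma~\ref{lem:roots-estimate}, which yields $\alpha^{(k)}=G_k+L(0.5^n)$, and write
\[
	\eta_i^{(k)}=\alpha^{(k)}-G_i=(G_k-G_i)+L(0.5^n).
\]
Factoring out $|G_k-G_i|$ and applying $|\log(1+x)|<2|x|$ gives $l_i^{(k)}=\log|G_k-G_i|+L\!\left(2\cdot 0.5^n/|G_k-G_i|\right)$. When $\max\{i,k\}=3$ the difference $|G_k-G_i|$ is $2^n$ or $2^n-F_n$; in the second case I would expand $\log(2^n-F_n)=n\log2+\log(1-F_n/2^n)$ and bound the correction by $2F_n/2^n$. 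When $\max\{i,k\}=2$ one has $|G_k-G_i|=F_n$, and inserting the Binet formula as $F_n=\tfrac{\phi^n}{\sqrt{5}}\bigl(1-(\psi/\phi)^n\bigr)$ extracts the main term $n\log\phi-\log\sqrt{5}$ while the remainder is controlled by $2|\psi/\phi|^n$.

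The diagonal estimates then drop out by summation. For $k\in\{1,2\}$ the column contains one term of type $\max=2$ and one of type $\max=3$, so negating their sum gives $-n(\log\phi+\log2)+\log\sqrt{5}$; for $k=3$ both off-diagonal terms are of type $\max=3$, giving $-2n\log2$. The error coefficients $1,2,3$ appearing in the statement simply count how many off-diagonal error terms are being combined in each line.

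The only real difficulty is quantitative: one must check that the single constant $0.81$ absorbs every error for all $n\geq29$. The binding case is $|G_k-G_i|=2^n-F_n$, where the dominant error is $2F_n/2^n\approx\tfrac{2}{\sqrt{5}}(\phi/2)^n$. Because $\phi/2=0.809\ldots$ lies just below $0.81$ and $\tfrac{2}{\sqrt{5}}<1$, this term stays below $0.81^n$, and the remaining contributions (of order $0.25^n$ from the root perturbation and $(|\psi|/2)^n$ from the Binet tail) are of strictly smaller exponential order and are comfortably absorbed for $n\geq29$. In effect the value $0.81$ is forced by this one case, while every other estimate carries substantial slack.
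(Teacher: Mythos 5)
Your proposal is correct and follows essentially the same route as the paper: estimate the off-diagonal terms by factoring via Lemma~\ref{lem:roots-estimate} and Binet's formula (with the binding error $F_n/2^n \approx \tfrac{1}{\sqrt{5}}(\phi/2)^n$ forcing the constant $0.81$), then obtain the diagonal terms from the unit relation $l_1^{(k)}+l_2^{(k)}+l_3^{(k)}=0$. The only cosmetic difference is that the paper factors out $G_{\max\{i,k\}}$ and absorbs the $2^n-F_n$ case into a single relative error bounded by $\tfrac{1}{2}\cdot 0.81^n$, whereas you factor out the exact difference $|G_k-G_i|$ and expand $\log(2^n-F_n)$ separately; the resulting estimates and error bookkeeping are identical.
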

\begin{proof}
For $i\neq k$ we have
\begin{align}
	l_i^{(k)}
	&= \log |\alpha^{(k)}-G_i|
	= \log |G_k + L(0.5^n) - G_i|\nonumber\\
	&= \log \left| G_{\max\{i,k\}}
		\left(1- \frac{G_{\min\{i,k\}}}{G_{\max\{i,k\}}} + \frac{L(0.5^n)}{G_{\max\{i,k\}}} \right)\right|\nonumber\\
	&= \log |G_{\max\{i,k\}}| + L(0.81^n),\label{eq:lik}
\end{align}
where we used that
\begin{align*}
	\frac{G_{\min\{i,j\}}}{G_{\max\{i,j\}}} + \frac{L(0.5^n)}{G_{\max\{i,j\}}}
	&\leq \frac{G_2}{G_{3}} + \frac{0.5^n}{G_3}\\
	&= \frac{\phi^n - \psi^n + \sqrt{5}\cdot 0.5^n }{\sqrt{5} \cdot 2^n}
	< \frac{1}{2} \cdot 0.81^n,
\end{align*}
where the constant $0.81$ comes from $\phi/2\approx 0.809<0.81$.

If $i\neq k \text{ and } \max\{i,k\}=3$, then by \eqref{eq:lik} 
\[
l_i^{(k)}= \log 2^n + L(0.81^n)= n \log 2 + L(0.81^n).
\]
If $i\neq k \text{ and } \max\{i,j\}=2$, then
\begin{align*}
	l_i^{(k)}
	&=\log F_n + L(0.81^n)
	=\log \frac{\phi^n-\psi^n}{\sqrt{5}} + L(0.81^n)\\
	&= \log \phi^n + \log (1- (\psi/\phi)^n) - \log \sqrt{5} + L(0.81^n)\\	
	&= n \log \phi + L(2 \cdot 0.39^n) - \log \sqrt{5} + L(0.81^n)\\
	&= n \log \phi - \log \sqrt{5} + L(2\cdot 0.81^n).
\end{align*}
The estimates for $i=k$ then follow immediately from the fact that $\eta_1^{(k)}\eta_2^{(k)}\eta_3^{(k)}= (\alpha^{(k)}-G_1)(\alpha^{(k)}-G_2)(\alpha^{(k)}-G_3) = 1$ and therefore $l_1^{(k)} + l_2^{(k)} + l_3^{(k)} = 0$.
\end{proof}

Now we can easily estimate the regulator of the unit group of $\Z[\alpha^{(i)}]$:

\begin{lem}\label{lem:reg_index}
Let $i,j,k,l \in \{1,2,3\}$ with $i\neq j, k\neq l$ and let
\begin{align*}
	R
	&:= \left| \det
	\begin{pmatrix}
		\log|\eta_i^{(k)}| &|\log \eta_j^{(k)}| \\
		\log |\eta_{i}^{(l)}| &  |\log \eta_{j}^{(l)}|
	\end{pmatrix} \right|
	= \left| \det \begin{pmatrix}
		l_i^{(k)} &  l_j^{(k)} \\
		l_{i}^{(l)} & l_{j}^{(l)} 
	\end{pmatrix}\right|
	= \left| l_i^{(k)}l_{j}^{(l)}-l_{i}^{(l)}l_j^{(k)} \right|.
\end{align*}
Then the value of $R$ is independent of the choice of $i,j,k,l$ and we have
\[
	n^2 < R < 2 n^2.
\]
\end{lem}
\begin{proof}
The independence of the choice of $i,j,k,l$ follows from the unit group structure: Since
$\eta_1^{(k)}\eta_2^{(k)}\eta_3^{(k)}=1$ and $\eta_i^{(1)}\eta_i^{(2)}\eta_i^{(3)}= \Norm_{\Q(\alpha^{(1)})/\Q}(\eta_i^{(1)})=\pm 1$, we have
\[
	\begin{pmatrix}
		l_1^{(k)}\\
		l_1^{(l)} 
	\end{pmatrix}
	+
	\begin{pmatrix}
		l_2^{(k)}\\
		l_2^{(l)} 
	\end{pmatrix}
	+
	\begin{pmatrix}
		l_3^{(k)}\\
		l_3^{(l)} 
	\end{pmatrix}
	= 0 
	\quad \text{and} \quad
	\begin{pmatrix}
		l_i^{(1)} & l_j^{(1)} 
	\end{pmatrix}
	+
	\begin{pmatrix}
		l_i^{(2)} & l_j^{(2)} 
	\end{pmatrix}
	+
	\begin{pmatrix}
		l_i^{(3)} & l_j^{(3)} 
	\end{pmatrix}
	=0.
\]
Thus we may exchange columns and lines in the matrix in the lemma, changing at most the sign of the determinant.

Using Lemma \ref{lem:lik} we compute
\begin{align*}
	\pm R
	&= \det \begin{pmatrix}
		l_1^{(1)} & l_2^{(1)} \\
		l_1^{(2)} & l_2^{(2)}
	\end{pmatrix}
	= l_1^{(1)}l_{2}^{(2)}-l_{1}^{(2)}l_2^{(1)}\\
	&= (- n(\log \phi + \log 2) + \log \sqrt{5} + L(3\cdot 0.81^n))^2 
		- (n\log \phi - \log \sqrt{5} + L(2\cdot 0.81^n))^2\\
	&= n^2 ( 2\log \phi \log 2 + (\log 2)^2 ) 
		- n \cdot 2 \log 2 \log \sqrt{5}
		+ L(10n \cdot 0.81^n)\\
	&= n^2 ( 2\log \phi \log 2 + (\log 2)^2 ) + L(1.2 n)
	\approx 1.15 n^2 + L(1.2n)
\end{align*}
and the inequalities follow.
\end{proof}

Now let $(x,y)\in \Z^2$ be a solution of \eqref{eq:main} with $|y|\geq 2$ and let $\alpha=\alpha^{(1)}$. Then we set 
\[
	\beta:= x-\alpha y.
\] 
This is a unit in $\Z[\alpha]$, since
\[
	\Norm_{\Q(\alpha)/\Q}(x-\alpha y)
	= (x- \alpha^{(1)} y) (x- \alpha^{(2)} y) (x-\alpha^{(3)}y)
	= x(x-F_n y)(x-2^n y) - y^3
	= \pm 1.
\]
We denote the conjugates of $\beta$ as usual by $\beta^{(i)}=x - \alpha^{(i)} y$ $(i=1,2,3)$ and we define the \textit{type}~$j$ of a solution $(x,y)\in \Z$ of \eqref{eq:main} by
\[
	|\beta^{(j)}|=\min_{i=1\bb n} |\beta^{(i)}|.
\]
From now on let $j$ be the type of $(x,y)$ and $\{j,k,l\}=\{1,2,3\}$.
We collect some information on the size of the $\beta^{(i)}$:

\begin{lem}\label{lem:betaiuj}
We have
\begin{align}
	|\beta^{(j)}|
	&\leq \frac{4}{|y|^2 \cdot|\alpha^{(l)} - \alpha^{(j)}| \cdot |\alpha^{(k)} - \alpha^{(j)}|},\label{eq:lem:betaiuj-j}\\
	|\beta^{(i)}|
	&\geq \frac{|y| \cdot |\alpha^{(i)}-\alpha^{(j)}|}{2}
	\quad \text{for } i =k,l\label{eq:lem:betaiuj-i}
\end{align}
and
\begin{equation}\label{eq:lem:betaiuj-log}
	\log |\beta^{(i)}| = \log |y| + l_j^{(i)} + L(0.4^n)
	\quad \text{for } i =k,l.
\end{equation}
\end{lem}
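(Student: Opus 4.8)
The plan is to lean on two elementary facts about the conjugates. First, the norm identity established just before the lemma gives $\beta^{(1)}\beta^{(2)}\beta^{(3)}=\pm1$, so $|\beta^{(j)}|=1/(|\beta^{(k)}|\,|\beta^{(l)}|)$. Second, the linear term $x$ cancels in differences, so $\beta^{(i)}-\beta^{(i')}=(\alpha^{(i')}-\alpha^{(i)})y$. I would prove the lower bound \eqref{eq:lem:betaiuj-i} first: for $i\in\{k,l\}$ the triangle inequality gives
\[
	|\beta^{(i)}|\geq |\beta^{(i)}-\beta^{(j)}|-|\beta^{(j)}|
	= |y|\cdot|\alpha^{(i)}-\alpha^{(j)}|-|\beta^{(j)}|,
\]
and since $j$ is the type we have $|\beta^{(j)}|\leq|\beta^{(i)}|$; rearranging $2|\beta^{(i)}|\geq |y|\cdot|\alpha^{(i)}-\alpha^{(j)}|$ yields \eqref{eq:lem:betaiuj-i}.

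The upper bound \eqref{eq:lem:betaiuj-j} then follows at once: inserting the two instances of \eqref{eq:lem:betaiuj-i} for $i=k$ and $i=l$ into $|\beta^{(j)}|=1/(|\beta^{(k)}|\,|\beta^{(l)}|)$ produces the constant $4$ in the numerator and the product $|\alpha^{(l)}-\alpha^{(j)}|\cdot|\alpha^{(k)}-\alpha^{(j)}|$ in the denominator.

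For the asymptotic formula \eqref{eq:lem:betaiuj-log} I would factor out the dominant term. From $\beta^{(i)}=\beta^{(j)}+(\alpha^{(j)}-\alpha^{(i)})y$ I write $\beta^{(i)}=(\alpha^{(j)}-\alpha^{(i)})y\bigl(1+\beta^{(j)}/((\alpha^{(j)}-\alpha^{(i)})y)\bigr)$, so that
\[
	\log|\beta^{(i)}|
	=\log|y|+\log|\alpha^{(i)}-\alpha^{(j)}|
	+\log\bigl|1+\beta^{(j)}/((\alpha^{(j)}-\alpha^{(i)})y)\bigr|.
\]
The first correction comes from replacing $\log|\alpha^{(i)}-\alpha^{(j)}|$ by $l_j^{(i)}=\log|\alpha^{(i)}-G_j|$: since $\alpha^{(j)}-G_j=L(0.5^n)$ by Lemma~\ref{lem:roots-estimate} while $|\alpha^{(i)}-G_j|=|\eta_j^{(i)}|$ is exponentially large for $i\neq j$ by Lemma~\ref{lem:lik}, one has $\alpha^{(i)}-\alpha^{(j)}=\eta_j^{(i)}(1-L(0.5^n)/\eta_j^{(i)})$, and $|\log(1+x)|<2|x|$ controls the resulting error. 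The second correction is bounded by substituting \eqref{eq:lem:betaiuj-j} into its argument, which is even smaller thanks to the extra powers of $|y|$ and of the root differences in the denominator.

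The only point requiring genuine care is the bookkeeping in \eqref{eq:lem:betaiuj-log}: I must check that both corrections are truly $\leq 0.4^n$ for $n\geq29$. The worst case is $\max\{i,j\}=2$, where $|\eta_j^{(i)}|\approx F_n\approx\phi^n/\sqrt{5}$, so $0.5^n/|\eta_j^{(i)}|$ behaves like $(0.5/\phi)^n\approx 0.31^n$; even after the factor $2$ from $|\log(1+x)|<2|x|$ this stays comfortably below $0.4^n$ in the relevant range, and the contribution of the second correction is negligible by comparison. Everything else reduces to the norm identity and the triangle inequality, so once \eqref{eq:lem:betaiuj-i} and \eqref{eq:lem:betaiuj-j} are in place the argument is routine.
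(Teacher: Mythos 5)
Your proposal is correct and takes essentially the same route as the paper: triangle inequality plus minimality of $|\beta^{(j)}|$ for \eqref{eq:lem:betaiuj-i}, the norm identity $\beta^{(1)}\beta^{(2)}\beta^{(3)}=\pm 1$ for \eqref{eq:lem:betaiuj-j}, and factoring out the dominant term $\eta_j^{(i)}$ together with $|\log(1+x)|<2|x|$ for \eqref{eq:lem:betaiuj-log}. The only cosmetic difference is that the paper combines the two error terms additively (both $\beta^{(j)}/y$ and $\alpha^{(j)}-G_j$ are $L(0.5^n)$, giving $|\beta^{(i)}/y| = |\eta_j^{(i)}| + L(2\cdot 0.5^n)$) before extracting a single multiplicative factor, whereas you use two successive multiplicative corrections; the error bookkeeping closes identically in both versions.
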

\begin{proof}
For $i\neq j$ we have 
\[
	|y| \cdot |\alpha^{(i)}- \alpha^{(j)}| 
	=|\alpha^{(i)} y - x + x -  \alpha^{(j)} y|
	\leq |\beta^{(i)}|+|\beta^{(j)}|
	\leq 2|\beta^{(i)}|,
\]
which proves Inequality \eqref{eq:lem:betaiuj-i}.
Since $\beta^{(1)}\beta^{(2)}\beta^{(3)}=\pm 1$, we get
\begin{align}\label{eq:betaj}
	|\beta^{(j)}|
	= \frac{1}{|\beta^{(k)}||\beta^{(l)}|}
	\leq \frac{4}{|y|^{2}|\alpha^{(l)}- \alpha^{(j)}||\alpha^{(k)}- \alpha^{(j)}| }
	\leq \frac{1}{|\alpha^{(l)}- \alpha^{(j)}||\alpha^{(k)}- \alpha^{(j)}|}
\end{align}
and we have proven Inequality \eqref{eq:lem:betaiuj-j}.
Moreover, using Lemma \ref{lem:roots-estimate}, it is easy to see that 
\begin{align*}
	|\alpha^{(l)}- \alpha^{(j)}||\alpha^{(k)}- \alpha^{(j)}|
	&\geq |\alpha^{(1)}- \alpha^{(2)}||\alpha^{(3)}- \alpha^{(2)}|\\
	&\geq (F_n - 2\cdot 0.5^n)) (2^n - F_n - 2 \cdot 0.5^n)\\
	&> 2^n.
\end{align*}
Thus we get from \eqref{eq:betaj}
\begin{equation}\label{eq:betaj2}
\beta^{(j)} = L(0.5^n).
\end{equation}
For $i=k,l$ we have
\begin{equation*}
	\left| \frac{\beta^{(i)}}{y}\right|
	= \left| 
		\frac{x}{y} - \alpha^{(j)} + \alpha^{(j)} - G_j + G_j - \alpha^{(i)}
	  \right|
\end{equation*}
and since $x/y-\alpha^{(j)}= \beta^{(j)}/y=L(0.5^n)$ by \eqref{eq:betaj2}, 
$\alpha^{(j)} - G_j = L(0.5^n)$ by Lemma~\ref{lem:roots-estimate} and $G_j - \alpha^{(i)}= \eta_j^{(i)}$, we get
\[
	\left| \frac{\beta^{(i)}}{y}\right| 
	= |\eta_j^{(i)}| + L(2\cdot 0.5^n).
\]
Since $|\eta_j^{(i)}| \geq F_n - 0.5^n > 1.5^n$, this yields
\[
	\left| \frac{\beta^{(i)}}{y}\right| 
		= |\eta_j^{(i)}|\left( 1 + L\left(\frac{2\cdot 0.5^n}{|\eta_j^{(i)}|}\right) \right)
	= |\eta_j^{(i)}| \left( 1+ L\left(\frac{1}{2} \cdot 0.4^n\right) \right).
\]
Taking logarithms we obtain Inequality \eqref{eq:lem:betaiuj-log}.
\end{proof}

Finally, note that since $\eta_1=\eta_1^{(1)}$ and $\eta_2=\eta_2^{(1)}$ are fundamental units in $\Z[\alpha]$, we can write $\beta=\beta^{(1)}$ as
\begin{equation}\label{eq:darstellungBeta}
	\beta = \pm \eta_1^{u_1} \eta_2^{u_2}.
\end{equation}
We find an upper bound for $u_1$ and $u_2$:

\begin{lem}\label{lem:bound_u}
For $u_1,u_2$ from \eqref{eq:darstellungBeta} we have
\[
	\max\{|u_1|,|u_2|\}
	< \frac{2\log|y|}{n} + 2.	 
\]
\end{lem}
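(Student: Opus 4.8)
The plan is to read off a linear system for $u_1$ and $u_2$ from the factorisation \eqref{eq:darstellungBeta} and invert it with Cramer's rule. Applying the two embeddings $m\in\{k,l\}$ to \eqref{eq:darstellungBeta} and taking $\log|\cdot|$ gives
\[
	\log|\beta^{(m)}| = u_1 l_1^{(m)} + u_2 l_2^{(m)} \qquad (m=k,l).
\]
The subscripts $1,2$ and the superscripts $k,l$ are pairwise distinct, so by Lemma~\ref{lem:reg_index} the determinant of this system equals $\pm R$ with $n^2<R<2n^2$. Hence the system is invertible and Cramer's rule yields
\[
	u_1 = \frac{\pm1}{R}\left( \log|\beta^{(k)}|\, l_2^{(l)} - \log|\beta^{(l)}|\, l_2^{(k)} \right),
\]
together with the symmetric expression $u_2 = \frac{\pm1}{R}\left( l_1^{(k)}\log|\beta^{(l)}| - l_1^{(l)}\log|\beta^{(k)}| \right)$.

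Next I would insert $\log|\beta^{(m)}| = \log|y| + l_j^{(m)} + L(0.4^n)$ from \eqref{eq:lem:betaiuj-log}. For $u_1$ this separates the numerator into a term $\log|y|\,(l_2^{(l)}-l_2^{(k)})$ that is linear in $\log|y|$, a cross term $l_j^{(k)}l_2^{(l)}-l_j^{(l)}l_2^{(k)}$, and a negligible contribution $L(0.4^n)(l_2^{(l)}-l_2^{(k)})$. The decisive point is that the cross term is precisely a $2\times2$ determinant of the $l$-values with column subscripts $j$ and $2$; by Lemma~\ref{lem:reg_index} it equals $\pm R$ if $j\neq2$ and vanishes if $j=2$, so in both cases its modulus is \emph{exactly} bounded by $R$ and, after dividing by $R$, it contributes at most $1$ (and not $2$).

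For the leading term I would invoke Lemma~\ref{lem:lik}: up to $L(\,\cdot\,0.81^n)$ the three quantities $l_2^{(1)},l_2^{(2)},l_2^{(3)}$ are $n\log\phi-\log\sqrt5$, $-n(\log\phi+\log2)+\log\sqrt5$ and $n\log2$, so their largest pairwise gap is $|l_2^{(2)}-l_2^{(3)}|\approx n(\log\phi+2\log2)\approx 1.87\,n<2n$. Hence $|l_2^{(l)}-l_2^{(k)}|<2n$, and together with $R>n^2$ the first term is bounded by $\tfrac{2\log|y|}{n}$. Adding the $\le1$ coming from the cross term and the exponentially small tail gives $|u_1|<\tfrac{2\log|y|}{n}+2$. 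The computation for $u_2$ is identical, with the subscript $2$ replaced by $1$ (the worst gap is then $|l_1^{(1)}-l_1^{(3)}|\approx 1.87\,n<2n$, attained at $j=2$), so taking the maximum yields $\max\{|u_1|,|u_2|\}<\tfrac{2\log|y|}{n}+2$.

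The main obstacle is the sharpness of the constants rather than any conceptual difficulty. Bounding the numerator crudely — lumping $l_j^{(m)}$ together with $\log|y|$ inside $|\log|\beta^{(m)}||$ and applying the triangle inequality — produces an additive constant of roughly $2.6$, exceeding the required $2$; likewise, bounding the cross term by $2n^2$ and only then dividing by $R>n^2$ would by itself already exhaust the whole budget of $2$. Both the coefficient of $\log|y|/n$ and the additive constant drop below the stated thresholds only once the numerator is split and the cross term is recognised as exactly $\pm R$ and cancelled against the regulator; the explicit leading coefficients of Lemma~\ref{lem:lik} are then needed to certify that every relevant gap $|l_s^{(l)}-l_s^{(k)}|$ stays strictly below $2n$ for $n\ge29$.
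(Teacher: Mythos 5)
Your proof is correct, and its skeleton coincides with the paper's: the paper forms the same linear system in the two conjugates $k,l$, notes that its coefficient matrix $A$ has determinant $\pm R$ by Lemma~\ref{lem:reg_index}, and inverts it. The difference lies in how the final estimate is assembled. The paper does not decompose the numerators at all: it takes the maximum-row-sum norm, $\max\{|u_1|,|u_2|\}\leq\|A^{-1}\|_\infty\max\{\log|\beta^{(k)}|,\log|\beta^{(l)}|\}$, bounds $\|A^{-1}\|_\infty<2/n$ using Lemmas~\ref{lem:lik} and~\ref{lem:reg_index}, and bounds $\log|\beta^{(m)}|\leq\log|y|+0.7n+L(0.4^n)$ from \eqref{eq:lem:betaiuj-log}; the key point making this crude route work is that the subscript occurring there is $j$ while the superscript is $\neq j$, so $|l_j^{(m)}|\leq 0.7n$ (the large value $\approx 1.17n$ of Lemma~\ref{lem:lik} only occurs for equal indices). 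This gives $\frac{2\log|y|}{n}+1.4$, comfortably inside the claimed bound, with no need for your cancellation. Your route instead substitutes \eqref{eq:lem:betaiuj-log} into the Cramer numerators and recognises the cross term $l_j^{(k)}l_2^{(l)}-l_j^{(l)}l_2^{(k)}$ (respectively $l_1^{(k)}l_j^{(l)}-l_1^{(l)}l_j^{(k)}$) as one of the regulator determinants of Lemma~\ref{lem:reg_index}, hence exactly $\pm R$ or $0$; dividing by $|\det A|=R$ it contributes at most $1$. This is a genuine refinement: it yields the sharper additive constant $1+o(1)$ and makes transparent that the coefficient $2$ of $\log|y|/n$ comes from the gap $|l_2^{(l)}-l_2^{(k)}|\approx n(\log\phi+2\log 2)\approx 1.87n<2n$. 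The only inaccuracy is in your closing commentary: the ``crude'' lumping route does not cost an additive constant of $2.6$ — since the lumped terms are always $l_j^{(m)}$ with $m\neq j$, the paper's version of that route stays at $1.4$; the figure $\approx 2.6$ would arise only if one replaced $0.7n$ by the global bound on $|l_i^{(k)}|$. This does not affect your proof, which is complete as it stands.
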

\begin{proof}
We take the absolute value and the logarithm of \eqref{eq:darstellungBeta} and consider the two conjugates which do not correspond to the type $j$ of $(x,y)$. We get
\begin{align*}
	\log |\beta^{(k)}| &= u_1 \log |\eta_1^{(k)}| + u_2 \log |\eta_2^{(k)}|,\\
	\log |\beta^{(l)}| &= u_1 \log |\eta_1^{(l)}| + u_2 \log |\eta_2^{(l)}|,
\end{align*}
which we can rewrite as 
\[
	\begin{pmatrix}
		\log |\beta^{(k)}|\\
		\log |\beta^{(l)}|
	\end{pmatrix}
	=
	\begin{pmatrix}
		l_1^{(k)} & l_2^{(k)}\\
		l_1^{(l)} & l_2^{(l)}
	\end{pmatrix}
	\begin{pmatrix}
	u_1\\
	u_2
	\end{pmatrix}.
\]
We denote the above $(2 \times 2)$-matrix by $A$. Then $A$ has exactly the form of the matrix in Lemma~\ref{lem:reg_index}. In particular, it has non-zero determinant and we can apply its inverse obtaining
\[
	\begin{pmatrix}
	u_1\\
	u_2
	\end{pmatrix}
	=	
	A^{-1} 
	\cdot 
	\begin{pmatrix}
		\log |\beta^{(k)}|\\
		\log |\beta^{(l)}|
	\end{pmatrix}.
\]
Taking the maximum absolute row sum norm $\|\cdot\|_\infty$ on both sides, we see that
\begin{equation}\label{eq:maxu1u2}
	\max\{|u_1|,|u_2|\}
	\leq \| A^{-1}\|_\infty \max\{ \log |\beta^{(k)}|, \log |\beta^{(l)}| \}.
\end{equation}
The inverse of $A$ is given by
\[
	A^{-1}
	= \frac{1}{\det A} 
	\begin{pmatrix}
		l_2^{(l)} & -l_2^{(k)}\\
		-l_1^{(l)} & l_1^{(k)}
	\end{pmatrix}.
\]
Since $|\det A|=R > n^2$ by Lemma \ref{lem:reg_index}, we can use Lemma \ref{lem:lik} to estimate the absolute row sum norm of $A^{-1}$:
\begin{align}\label{eq:A-1}
	\|A^{-1}\|_\infty
	&< \frac{1}{n^2} (n(\log \phi + \log 2)- \log \sqrt{5} + L(3\cdot 0.81^n) + n \log 2 + L(0.81))
	< 2 n^{-1}.
\end{align}
Combining \eqref{eq:maxu1u2}, \eqref{eq:A-1}, \eqref{eq:lem:betaiuj-log} and Lemma \ref{lem:lik}
we obtain
\begin{align*}
	\max\{|u_1|,|u_2|\}
	&< 2n^{-1} ( \log |y| + \max\{|l_j^{(k)}|,|l_j^{(l)}|\} + L(0.4^n) )\\
	&< 2n^{-1} ( \log |y| + 0.7 n)
	= \frac{2\log|y|}{n} + 2.	 
\end{align*}
\end{proof}

\section{An upper bound for $\log |y|$ and solutions for $n\leq 1000$}\label{sec:uBy-n1000}

In this section we do the standard procedure for solving Thue equations and bound $|y|$ in terms of $n$. Note that such a bound can be obtained immediately from an explicit result by Bugeaud and Gy\H{o}ry \cite{BugeaudGyory1996}. However, we want slightly better constants and we also want to completely solve Equation \eqref{eq:main} for $n\leq 1000$.

\begin{lem}\label{lem:upperBoundY}
For $n>28$ we have
\[
	\log |y| 
	< 3.66 \cdot 10^{16} \cdot n^3.
\]
\end{lem}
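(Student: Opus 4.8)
The plan is to produce the upper bound for $\log|y|$ via the standard linear-forms-in-logarithms attack, starting from a relation that forces one particular conjugate $\beta^{(j)}$ to be extremely small. Concretely, I would begin from the identity used to define the type: since $|\beta^{(j)}|=L(0.5^n)$ is tiny by \eqref{eq:betaj2}, the quantity $\beta^{(j)}=x-\alpha^{(j)}y$ is a near-vanishing unit, and taking logarithms of the factorisation $\beta=\pm\eta_1^{u_1}\eta_2^{u_2}$ evaluated at the conjugate $j$ gives
\[
	\log|\beta^{(j)}| = u_1 l_1^{(j)} + u_2 l_2^{(j)}.
\]
The left-hand side is very negative (of size about $-2\log|y|$ by \eqref{eq:lem:betaiuj-j} together with the root separation $\gtrsim 2^n$), so this already expresses $\log|y|$ in terms of a linear combination of the $l_i^{(j)}$ with integer coefficients $u_1,u_2$ bounded by Lemma~\ref{lem:bound_u}.

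Next I would convert this into a genuine linear form $\Lambda$ in three logarithms to which Lemma~\ref{lem:matveev} applies. The natural move is to use the Siegel-type identity relating the three conjugates of $\beta$: a combination like
\[
	\Lambda = \log\left|\frac{\beta^{(k)}(\alpha^{(l)}-\alpha^{(j)})}{\beta^{(l)}(\alpha^{(k)}-\alpha^{(j)})}\right|,
\]
which is small (comparable to $\beta^{(j)}/y$, hence $L(0.5^n)$) because $\beta^{(j)}$ is small, and which, upon substituting $\beta^{(i)}=\pm\eta_1^{u_1}\eta_2^{u_2}$ evaluated at $i=k,l$, becomes an integer combination $(u_1)\log|\eta_1^{(k)}/\eta_1^{(l)}| + (u_2)\log|\eta_2^{(k)}/\eta_2^{(l)}| + \log|(\alpha^{(l)}-\alpha^{(j)})/(\alpha^{(k)}-\alpha^{(j)})|$ of three logarithms of fixed algebraic numbers. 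I would then feed this into Matveev's bound: the field degree is $D=3$ (or $6$ if one works over the splitting field — I would take the Galois closure and set $D=6$ to be safe), the heights $A_i$ are controlled by Lemma~\ref{lem:lik} which gives $h$-estimates linear in $n$ (so each $A_i = L(\text{const}\cdot n)$), and the coefficient bound $B$ is $\max\{|u_1|,|u_2|,1\}$, which by Lemma~\ref{lem:bound_u} is $L(2\log|y|/n + 2)$. Matveev then yields
\[
	\log|\Lambda| > -C \cdot \log(eB)\cdot A_1 A_2 A_3
\]
with $A_1A_2A_3 = L(\text{const}\cdot n^2)$ and $\log(eB)=L(\log\log|y| + \text{const})$.

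The crux is then to combine the two sides. On one hand $\log|\Lambda| < -cn$ (roughly $-n\log 2$) from the upper estimate $\Lambda=L(0.5^n)$; on the other hand Matveev gives $\log|\Lambda| > -\kappa\, n^2 \log(\log|y|)$ for an explicit constant $\kappa$. Comparing these and isolating $\log|y|$ produces an inequality of the shape $n \lesssim \kappa n^2 \log\log|y|$, which does \emph{not} immediately bound $\log|y|$ — so instead I would organise the comparison so that $\log|y|$ itself appears, using that $\log|\beta^{(j)}|\approx -2\log|y|$ links $|\Lambda|$ (or a companion linear form) directly to $\log|y|$ rather than only to $n$. The resulting inequality has the form $\log|y| < c(a + \log\log|y|)$ with $c$ of size $\text{const}\cdot n^2$ and $a$ of size $\text{const}\cdot n$; here Lemma~\ref{lem:ineqLog} is exactly the tool to remove the transcendental $\log\log|y|$ term, yielding $\log|y| < 2c(a+\log c)$. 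Tracking the explicit constants from Matveev ($1.4\cdot10^{11}$, $D^2\log(eD)$ with $D=6$) against the linear-in-$n$ heights and the $L(\cdot)$-bookkeeping should collapse to a bound of the form $\text{const}\cdot n^3$, with the stated constant $3.66\cdot10^{16}$.

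I expect the main obstacle to be the careful choice and algebraic manipulation of the linear form so that (a) it is provably nonzero (one must rule out $\Lambda=0$, typically by an independence or sign argument on the units), and (b) it couples to $\log|y|$ strongly enough that the final inequality bounds $\log|y|$ rather than merely relating $n$ to itself. The distinction between types $j=1,2$ and $j=3$ flagged in the introduction will likely surface here: because the dominant roots have different sizes, the magnitudes of the $l_i^{(j)}$ differ across cases, so the constant in front of $n^2$ in Matveev's output — and hence the bookkeeping leading to the $n^3$ bound — may need a case split, with the $j=3$ case being more favourable. Keeping the constants honest through the $L$-notation while extracting the clean cubic bound is where the real work lies.
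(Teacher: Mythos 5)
Your plan is essentially the paper's proof: the Siegel identity produces exactly the linear form $\Lambda=\log|\gamma_0|+u_1\log|\gamma_1|+u_2\log|\gamma_2|$ considered there, bounded above by $80\cdot 0.2^n\cdot|y|^{-3}$, then Matveev's theorem with $D=6$, heights linear in $n$, and $B=2\log|y|/n+2$ from Lemma~\ref{lem:bound_u} gives the lower bound, and Lemma~\ref{lem:ineqLog} removes the $\log\log|y|$ term --- in particular, the $|y|^{-3}$ coupling you correctly insist on (via $|\beta^{(j)}|\lesssim|y|^{-2}$ and $|\beta^{(k)}|\gtrsim|y|$) is precisely what turns the comparison into an upper bound for $\log|y|$ rather than a vacuous relation. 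Two minor corrections: since all three heights are linear in $n$, the product $A_1A_2A_3$ is of order $n^3$, not $n^2$, which is exactly where the cubic bound comes from; and the non-vanishing of $\Lambda$ requires no independence argument on the units --- it is equivalent to the right-hand side of the Siegel identity being non-zero, which is clear since $\beta^{(j)}\neq 0$ and the roots are distinct, so no case split on $j$ is needed anywhere in this lemma.
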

\begin{proof}
Note that the identity
\[
	(\alpha^{(j)}-\alpha^{(k)})\beta^{(l)} +
	(\alpha^{(l)}-\alpha^{(j)})\beta^{(k)} +
	(\alpha^{(k)}-\alpha^{(l)})\beta^{(j)} 
	=0
\]
implies
\begin{equation}\label{eq:nachSiegel}
	\frac{\alpha^{(j)}-\alpha^{(k)}}{\alpha^{(j)}-\alpha^{(l)}} \cdot \frac{\beta^{(l)}}{\beta^{(k)}} -1 
	= \frac{\alpha^{(l)}-\alpha^{(k)}}{\alpha^{(j)}-\alpha^{(l)}} \cdot \frac{\beta^{(j)}}{\beta^{(k)}}.
\end{equation}
We estimate the right hand side of \eqref{eq:nachSiegel} using \eqref{eq:lem:betaiuj-j} and \eqref{eq:lem:betaiuj-i} from Lemma \ref{lem:betaiuj}. We also use Lemma \ref{lem:roots-estimate} to estimate the differences between the roots $\alpha^{(i)}$:
\begin{align}
	\left| \frac{\alpha^{(l)}-\alpha^{(k)}}{\alpha^{(j)}-\alpha^{(l)}} \cdot \frac{\beta^{(j)}}{\beta^{(k)}} \right|
&\leq  
	\frac{|\alpha^{(l)}-\alpha^{(k)}|}{|\alpha^{(j)}-\alpha^{(l)}|}
	\cdot 
	\frac{4}{|y|^{2}|\alpha^{(l)}- \alpha^{(j)}||\alpha^{(k)}- \alpha^{(j)}| }
	\cdot \frac{2}{|y| \cdot |\alpha^{(k)}- \alpha^{(j)}|}\nonumber\\
&=
	\frac{8}{|y|^3} \cdot \frac{|\alpha^{(l)}-\alpha^{(k)}|}{|\alpha^{(j)}- \alpha^{(l)}|^2|\alpha^{(j)}- \alpha^{(k)}|^2}\nonumber\\
&\leq
	\frac{8}{|y|^3} \cdot \frac{|\alpha^{(3)}-\alpha^{(1)}|}{|\alpha^{(2)}- \alpha^{(3)}|^2|\alpha^{(2)}- \alpha^{(1)}|^2}\nonumber\\
&\leq \frac{8}{|y|^3}\cdot \frac{2^n + 2\cdot 0.5^n}{(2^n-F_n-2\cdot 0.5^n)^2(F_n - 2\cdot 0.5^n)^2}
	< 40 \cdot 0.2^n \cdot |y|^{-3}.\label{eq:nachSiegel-RS}
\end{align}
Next we use the representation for $\beta$ from \eqref{eq:darstellungBeta} and rewrite the left hand side of \eqref{eq:nachSiegel} as
\begin{equation}\label{eq:nachSiegel-LS}
	\frac{\alpha^{(j)}-\alpha^{(k)}}{\alpha^{(j)}-\alpha^{(l)}} \cdot \frac{\beta^{(l)}}{\beta^{(k)}} -1 
= \frac{\alpha^{(j)}-\alpha^{(k)}}{\alpha^{(j)}-\alpha^{(l)}} \cdot 
	\left(\frac{\eta_1^{(l)}}{\eta_1^{(k)}}\right)^{u_1}
	\left(\frac{\eta_2^{(l)}}{\eta_2^{(k)}}\right)^{u_2}
	-1.
\end{equation}
Since $|\log x| < 2 |x-1|$ for $|x-1|<0.5$, we obtain from \eqref{eq:nachSiegel}, \eqref{eq:nachSiegel-RS} and \eqref{eq:nachSiegel-LS}
\begin{equation}\label{eq:lambda-upperBound}
	|\Lambda|:=	
	\left| 
	\log \left|\frac{\alpha^{(j)}-\alpha^{(k)}}{\alpha^{(j)}-\alpha^{(l)}}\right|
	+ u_1 \log \left| \frac{\eta_1^{(l)}}{\eta_1^{(k)}}\right|
	+ u_2 \log \left|\frac{\eta_2^{(l)}}{\eta_2^{(k)}}\right|
	 \right|
<
	80 \cdot 0.2^n \cdot |y|^{-3}.
\end{equation}
Note that $\Lambda$ is zero if and only if \eqref{eq:nachSiegel-LS} is zero. But \eqref{eq:nachSiegel-LS} is only zero if \eqref{eq:nachSiegel} is zero, which is not the case because the right hand side is clearly non-zero. Thus $\Lambda \neq 0$.

In order to apply a lower bound for linear forms in logarithms, we need to estimate the heights of the numbers in the logarithms, let us call these numbers $\gamma_0,\gamma_1,\gamma_2$.
The height of the roots $\alpha^{(i)}$ is given by
\[
	h(\alpha^{(i)}) 
	= \frac{1}{3}(\log |\alpha^{(2)}| + \log |\alpha^{(3)}|)
	< 0.4 \cdot n
	\quad (i=1,2,3),
\]
where we used Lemma \ref{lem:roots-estimate} for the estimation.
By the properties of heights we have
\begin{align*}
	h(\gamma_0)=
	h \left( \left| \frac{\alpha^{(j)}-\alpha^{(k)}}{\alpha^{(j)}-\alpha^{(l)}} \right| \right) 
	&\leq 2 h(\alpha^{(j)}) + h(\alpha^{(k)}) + h(\alpha^{(l)}) + 2\log 2
	< 1.7 n
\end{align*}
and
\begin{align*}
	h(\gamma_i)=
	h \left( \frac{\eta_i^{(l)}}{\eta_i^{(k)}}\right)
	&= h \left( \left| \frac{\alpha^{(l)}-G_i}{\alpha^{(k)}-G_i}\right|\right)
	\leq 2 h(G_i) + h(\alpha^{(l)}) + h(\alpha^{(k)}) + 2 \log 2\\
	&< 2 \cdot n \log 2 + 2 \cdot 0.4 \cdot n + 2 \log 2
	< 2.3 n \quad (i=1,2).
\end{align*}
By Lemma \ref{lem:bound_u} we may set
\[
	B:= \frac{2 \log|y|}{n}+2.
\]
Note that $D=[\Q(\gamma_0,\gamma_1,\gamma_2):\Q] \leq 6$, so an
application of Lemma~\ref{lem:matveev} yields
\[
	\log |\Lambda| 
	> -1.4 \cdot 10^{11} \cdot 6^2 \cdot \log (e \cdot 6) \cdot \log \left(e \cdot \left(\frac{2 \log|y|}{n} + 2 \right)\right)\cdot 6^3 \cdot 1.7n \cdot 2.3n \cdot 2.3n.
\]
If $\log |y|/n <4$, then we are already done. Thus we may assume $\log |y|/n \geq 4$ and estimate $\log (e ( 2\log |y| /n + 2))\leq \log \log |y| - \log n + 2$ and obtain
\[
	\log |\Lambda|
	> -C_1 (\log \log |y| - n + 2)n^3,
\]
with $C_1 = 2.74\cdot 10^{16}> 1.4 \cdot 10^{11} \cdot 6^2 \cdot \log (e \cdot 6)\cdot 6^3 \cdot 1.7 \cdot 2.3 \cdot 2.3$.
Thus, combined with \eqref{eq:lambda-upperBound} we have
\[
	- C_1 (\log \log |y| - \log n + 2)n^3
	< \log 80 + n \log 0.2 - 3 \log |y|.
\]
This implies 
\[
	\log |y|
	< \frac{C_1}{3} (2n^3 + \log \log |y|)
\]
and with Lemma \ref{lem:ineqLog} we get 
\[
	\log |y| 
	< \frac{2C_1}{3} \left(2 n^3 + \log \frac{C_1}{3}\right)
	< 	3.66 \cdot 10^{16} \cdot n^3.
\]
\end{proof}

\begin{lem}\label{lem:solutions10n1000}
For $28 < n \leq 1000$ the only solutions to \eqref{eq:main} are the trivial solutions.
\end{lem}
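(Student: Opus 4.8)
The plan is to reduce, for each individual value of $n$ in the range $29\le n\le 1000$, the enormous upper bound for $|y|$ coming from Lemma~\ref{lem:upperBoundY} down to a small constant, by applying the Baker--Davenport reduction (Lemma~\ref{lem:Baker-Davenport}) to the very linear form $\Lambda$ that was constructed in the proof of Lemma~\ref{lem:upperBoundY}. Indeed, inequality \eqref{eq:lambda-upperBound} has exactly the shape required by Lemma~\ref{lem:Baker-Davenport}: setting $\gamma_0=\left|\frac{\alpha^{(j)}-\alpha^{(k)}}{\alpha^{(j)}-\alpha^{(l)}}\right|$, $\gamma_1=\left|\frac{\eta_1^{(l)}}{\eta_1^{(k)}}\right|$ and $\gamma_2=\left|\frac{\eta_2^{(l)}}{\eta_2^{(k)}}\right|$, we have $|\log\gamma_0+u_1\log\gamma_1+u_2\log\gamma_2|<c\cdot w^{-k}$ with $w=|y|$, $c=80\cdot 0.2^n$ and $k=3$.

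First I would fix the exponent bound. By Lemma~\ref{lem:upperBoundY} combined with Lemma~\ref{lem:bound_u}, every solution satisfies $\max\{|u_1|,|u_2|\}<\frac{2}{n}\cdot 3.66\cdot 10^{16}n^3+2$, so I may take $M:=7.4\cdot 10^{16}n^2$ as an over-estimate valid throughout the range. Then, looping over $n=29,\dots,1000$ and over the three possible types $j\in\{1,2,3\}$ (with $\{k,l\}=\{1,2,3\}\setminus\{j\}$), I would compute the roots $\alpha^{(i)}$ of $f_n$, and hence the numbers $\gamma_0,\gamma_1,\gamma_2$, to sufficiently high precision, determine a convergent $p/q$ of the continued fraction of $\log\gamma_1/\log\gamma_2$ whose denominator $q$ comfortably exceeds $M$, and verify that $\eps=\left\|\frac{\log\gamma_0}{\log\gamma_2}q\right\|-M\left\|\frac{\log\gamma_1}{\log\gamma_2}q\right\|$ is positive. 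Lemma~\ref{lem:Baker-Davenport} then yields $|y|<\left(\frac{qc}{\eps|\log\gamma_2|}\right)^{1/3}$.

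The key point is that $c=80\cdot 0.2^n$ shrinks exponentially in $n$, whereas $q$ is only of polynomial size (comparable to $M\sim n^2$), so the right-hand side is a small constant: the worst case is the smallest admissible value $n=29$, and the bound improves rapidly as $n$ grows. Thus in every case the reduction leaves only a single-digit bound on $|y|$; the finitely many candidate values $|y|\ge 2$ below that bound can be checked directly by inspecting the cubic in $x$ for each fixed $y$ and each sign, and (together with Lemma~\ref{lem:y-small}, which disposes of $|y|\le 1$) this shows that only the trivial solutions occur.

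The main obstacle is computational rather than conceptual: one must guarantee that the reduction succeeds uniformly across all $972$ values of $n$ and all three types, i.e.\ that for each the chosen convergent produces $\eps>0$. This can fail only if the continued fraction of $\log\gamma_1/\log\gamma_2$ happens to contain an unusually large partial quotient, in which case I would simply pass to the next convergent or carry out a second reduction round; since $c$ is already exponentially small, one successful step suffices to collapse the bound. A secondary technical issue is that, because the coefficient $2^n$ is large, the roots $\alpha^{(i)}$ and the logarithms $\log\gamma_i$ must be computed to enough precision that the nearest-integer operations in the definition of $\eps$, and hence the final bound, are reliable.
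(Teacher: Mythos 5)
Your overall strategy is the same as the paper's: apply Baker--Davenport (Lemma~\ref{lem:Baker-Davenport}) to the linear form \eqref{eq:lambda-upperBound} for each $n=29,\dots,1000$ and each type $j\in\{1,2,3\}$, with $M$ coming from Lemmas~\ref{lem:upperBoundY} and~\ref{lem:bound_u}, $c=80\cdot 0.2^n$, $k=3$ and $w=|y|$. However, there is a genuine gap in your final step. You assume that one reduction round collapses the bound on $|y|$ to a single-digit constant, ``the worst case being $n=29$,'' and that the surviving candidates can then be checked by brute force over all $|y|$ below the bound. This expectation is false, and the paper's accompanying remark explains exactly why: for $j\in\{1,2\}$ one has $\gamma_0\approx 1/\gamma_1$, so $\log\gamma_0/\log\gamma_2$ and $\log\gamma_1/\log\gamma_2$ are nearly equal in size (with opposite sign), which forces $q$ to be taken far larger than $6M$ before $\eps$ becomes positive, and even then $\eps$ remains tiny; for $j=3$ the situation is worse still, since $\gamma_0\to 1$ and $\gamma_1\approx 1/\gamma_2$. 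The net effect is that the reduced bound $Y$ \emph{grows} with $n$ and reaches about $1.5\cdot 10^{7}$ for $j\in\{1,2\}$ and about $3.29\cdot 10^{99}$ for $j=3$ at $n=1000$. Enumerating all $2\le |y|<Y$ is then hopeless, so your concluding check does not go through as stated.

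The missing idea is the paper's use of the continued fraction of $\alpha^{(j)}$ itself: by Lemma~\ref{lem:betaiuj},
\[
	|x-\alpha^{(j)}y| = |\beta^{(j)}|
	\le \frac{4}{|y|^2\,|\alpha^{(l)}-\alpha^{(j)}|\,|\alpha^{(k)}-\alpha^{(j)}|}
	< \frac{1}{2|y|},
\]
so $x/y$ must be a convergent of $\alpha^{(j)}$. Hence one only needs to test the convergents of $\alpha^{(j)}$ with denominator $2\le y < Y$, of which there are only $O(\log Y)$ many, and this remains entirely feasible even when $Y\approx 10^{99}$. With that replacement for your brute-force check (and computing $\alpha^{(j)}$ to sufficient precision), your argument coincides with the paper's proof; without it, the proof is incomplete.
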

\begin{proof}
Using Sage \cite{sagemath}, we do the following procedure for each $n= 29\bb 1000$.
First, we use Lemma \ref{lem:upperBoundY} to compute an upper bound for $\log |y|$:
\[
	L:=3.66 \cdot 10^{16} \cdot n^3. 
\]
Then we compute the bound for $|u_1|,|u_2|$ from Lemma \ref{lem:bound_u}:
\[
	M:=\frac{2 L}{n}+2.
\]
Next, we recall Inequality \eqref{eq:lambda-upperBound}:
\begin{equation*}
	\left| 
	\log \gamma_0
	+ u_1 \log \gamma_1
	+ u_2 \log \gamma_2
	 \right|
<
	80 \cdot 0.2^n \cdot |y|^{-3},
\end{equation*}
where
\begin{equation*}
	\gamma_0 = \left|\frac{\alpha^{(j)}-\alpha^{(k)}}{\alpha^{(j)}-\alpha^{(l)}}\right|,
	\quad
	\gamma_1 = \left| \frac{\eta_1^{(l)}}{\eta_1^{(k)}}\right|,
	\quad
	\gamma_2 = \left|\frac{\eta_2^{(l)}}{\eta_2^{(k)}}\right|,
\end{equation*}
and use the Baker-Davenport reduction method to reduce the bound on $|y|$.
In the following, we distinguish between the three cases $j=1,2,3$ (note that we can then choose the values for $k$ and $l$ freely, and switching the values of $k,l$ does not change the reduction process anyway). 
In each case the numbers $\gamma_0, \gamma_1, \gamma_2$ depend on $n$ and can be computed to arbitrarily high precision (since the the roots $\alpha^{(1)},\alpha^{(2)},\alpha^{(3)}$ of $f_n(X)$ can be determined numerically).
After finding a suitable convergent $\frac{p}{q}$ to $\frac{\log \gamma_1}{\log \gamma_2}$, such that the $\eps$ in Lemma~\ref{lem:Baker-Davenport} is positive, we compute the upper bound for $|y|$:
\[
	Y:= \left( \frac{q \cdot 80 \cdot 0.2^n}{\eps |\log \gamma_2|} \right)^{1/3}.
\]
Indeed, we mange to find such a convergent (and therefore a bound $Y$) in each case and for each $n$. 
Then we note that by Lemma \ref{lem:betaiuj} we have
\[
	|x- \alpha^{(j)}y|
	= |\beta^{(j)}|
	\leq \frac{4}{|y|^2 |\alpha^{(l)}- \alpha^{(j)}| \cdot |\alpha^{(k)}-\alpha^{(j)}|}
	< \frac{1}{2 |y|},
\]
which implies that $\frac{x}{y}$ is a convergent to $\alpha^{(j)}$ (see e.g. \cite[p. 47]{Baker1984}). Thus we only need to check all the convergents $\frac{x}{y}$ to $\alpha^{(j)}$ with $2 \leq y < Y$.
The size of $Y$ varies with the values of $j,k,l$ and $n$ and lies between $4$ and $3.29\cdot 10^{99}$ in each case. 
We ran all the computations in Sage for $n=29\bb 1000$ and all three cases $j=1,2,3$. The computations took about one hour on a usual PC and revealed no solutions.
\end{proof}

\begin{rem}
In the above computations the size of $Y$ varies considerably. Not only is there a general increase in $Y$ as $n$ grows, but also does the case $j=3$ yield much larger values than the cases $j=1,2$. 

Usually, in the Baker-Davenport reduction  it is enough to find a convergent $p/q$ with $q>6M$ for $\eps$ to be positive, and $\eps$ is a rather random small positive number. Thus, at first glance, we would expect $Y$ to get smaller and smaller, because $q$ and $\log \gamma_2$ are roughly linear in $n$ and the factor $0.2^n$ should make $Y$ very small. 

However, if $j=1$ or $j=2$, then it turns out that $\gamma_0 \approx \frac{1}{\gamma_1}$, so $\frac{\log \gamma_1}{\log \gamma_2}$ and $\frac{\log \gamma_0}{\log \gamma_2}$ have very similar size (and opposite sign). Thus, $q$ must be very large if
\[
	\eps =\left\| \frac{\log \gamma_0}{\log \gamma_2} q \right\| 
		- M \left\| \frac{\log \gamma_1}{\log \gamma_2} q \right\|
\] 
should be positive. And even when $\eps$ is positive, it will be very small (increasing $q$ further makes $\eps$ larger, but does not help reduce $Y$). This is the reason that $Y$ is surprisingly large and the effect gets more significant as $n$ grows. For $n=29$ we get approximately $Y= 4.18$ for $j=1$ and $Y= 4.74$ for $j=2$. For $n=1000$ we get $Y=1.47\cdot 10^7$ and $Y=1.30\cdot 10^7$. 

If $j=3$, then the situation is even worse: $\gamma_0$ gets very close to 1 and $\frac{\log \gamma_0}{\log \gamma_2}$ gets very close to zero as $n$ increases. Moreover, $\gamma_1\approx \frac{1}{\gamma_2}$, so $\frac{\log \gamma_1}{\log \gamma_2}$ gets very close to $-1$. It turns out that we need extremely large $q$ for $\eps$ to be positive. For $n=29$ we get approximately $Y= 5670$, and for $n=1000$ we get $Y= 3.28 \cdot 10^{99}$. 

In the next section we will see again that there seems to be a fundamental difference between the cases $j=1,2$ and the case $j=3$. But there, $j=3$ will be the easier scenario.
\end{rem}

From now on assume that $n>1000$.

\section{A lower bound for $\log |y|$}\label{sec:lowerBoundLogY}

In this section we find a lower bound for $\log |y|$ depending on $n$ and on the type of the solution $(x,y)$. 

\begin{lem}\label{lem:j3}
If $n>1000$ and the type of $(x,y)$ is $j=3$, then 
\[
	\log |y| > 1.2^n.
\]
\end{lem}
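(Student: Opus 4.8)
The plan is to collapse the unit representation \eqref{eq:darstellungBeta} to a single exponent and then to play an exponentially small linear form against the smallness of $\beta^{(3)}$. Put $\{k,l\}=\{1,2\}$. By \eqref{eq:lem:betaiuj-log} and Lemma~\ref{lem:lik} both large conjugates satisfy $\log|\beta^{(k)}|=\log|y|+n\log2+L(\ldots)=\log|\beta^{(l)}|+L(\ldots)$, so in the linear system of Lemma~\ref{lem:bound_u} the two rows agree up to exponentially small terms; since Lemma~\ref{lem:lik} also gives $l_1^{(1)}-l_1^{(2)}=-(l_2^{(1)}-l_2^{(2)})+L(\ldots)$, the integer $u_1-u_2$ comes out smaller than $1$ in absolute value, whence $u_1=u_2=:-m$. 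As $\eta_1\eta_2\eta_3=1$ this means $\beta=\pm\eta_3^{\,m}$, and because $m\le1$ forces $|y|\le1$ we may assume $m\ge2$. Finally, equating $\log|\beta^{(3)}|=m\,l_3^{(3)}$ with $\log|\beta^{(3)}|=-\log|\beta^{(k)}|-\log|\beta^{(l)}|$ and using Lemma~\ref{lem:lik} produces the relation $\log|y|=(m-1)\,n\log2+L(\ldots)$. In particular $\log|y|>1.2^n$ is equivalent to $m$ being exponentially large, so I would assume for contradiction that $\log|y|\le1.2^n$ and work from the resulting moderate size of $m$.

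The second step is the linear form. Write $\gamma_0=\bigl|(\alpha^{(3)}-\alpha^{(1)})/(\alpha^{(3)}-\alpha^{(2)})\bigr|$ and $\rho=\bigl|\eta_3^{(1)}/\eta_3^{(2)}\bigr|=\gamma_1\gamma_2$; specialising the Siegel identity \eqref{eq:nachSiegel} to $j=3$ turns its left-hand side into $\gamma_0\rho^{-m}-1$, i.e.\ into the form $\Lambda=\log\gamma_0-m\log\rho$ of \eqref{eq:lambda-upperBound}, for which that inequality yields $|\Lambda|<80\cdot0.2^n|y|^{-3}$. For a lower bound observe that $\gamma_0$ and $\rho$ both equal $1+F_n/2^n+L(\ldots)$, so that $\log\gamma_0=\log\rho+\delta$ with $|\delta|\le0.25^n\log\rho$, while $\log\rho\ge F_n/2^{\,n+2}$. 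Hence for every $m\ge2$
\[
  |\Lambda|=\bigl|(m-1)\log\rho-\delta\bigr|\ \ge\ \log\rho-|\delta|\ \ge\ \tfrac12\log\rho\ \ge\ \frac{F_n}{2^{\,n+3}}.
\]
Comparing the two bounds gives $F_n/2^{\,n+3}<80\cdot0.2^n|y|^{-3}$, that is $|y|^3<640\cdot0.4^n/F_n$, which is $<8$ once $n>1000$ and contradicts $|y|\ge2$. This rules out moderate $m$, so $\log|y|>1.2^n$. The number $1.2$ is chosen below $2/\phi\approx1.236$ (equivalently $1.2\,\phi<2$): since $\log\rho\approx(\phi/2)^n/\sqrt5$, this is exactly the threshold ensuring that $\log|y|\le1.2^n$ keeps $(m-1)\log\rho$ bounded, so that the linearisation of $\gamma_0\rho^{-m}-1$ and all $L$-estimates above remain valid.

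The hard part is the exponentially small near-cancellation inside $\Lambda$. Because $\alpha^{(3)}$ is itself within $L(0.5^n)$ of $2^n$, the numbers $\gamma_0$ and $\rho$ agree to an enormous number of digits, so $\gamma_0\rho^{-m}-1$ is a difference of two nearly equal, exponentially small quantities; the lower bound $|\Lambda|\ge\tfrac12\log\rho$ therefore depends on controlling $\delta=\log\gamma_0-\log\rho$ to strictly higher precision than $\log\rho$ itself. Concretely this rests on the sharp root estimate $\alpha^{(3)}-2^n=L(0.5^n)$ of Lemma~\ref{lem:roots-estimate}, which forces the strict inequality $\gamma_0<\rho^2$ and hence prevents $\log\rho$ from being absorbed into the error term. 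Once this comparison is in place, the case $j=3$ needs no second appeal to Matveev's bound (Lemma~\ref{lem:matveev}), which is what makes it markedly easier than $j=1,2$.
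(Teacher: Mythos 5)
Your route is genuinely different from the paper's and can be made to work, but as written it has one real gap: the step ``the integer $u_1-u_2$ comes out smaller than $1$ in absolute value, whence $u_1=u_2$''. Subtracting the two equations of the linear system gives
\[
	(u_1-u_2)\bigl(l_1^{(1)}-l_1^{(2)}\bigr)
	= \bigl(\log|\beta^{(1)}|-\log|\beta^{(2)}|\bigr)
	- u_2\bigl[(l_1^{(1)}-l_1^{(2)})+(l_2^{(1)}-l_2^{(2)})\bigr],
\]
and while the first bracket is $L(3\cdot 0.81^n)$ and the second is $L(10\cdot 0.81^n)$ by Lemma \ref{lem:betaiuj} and Lemma \ref{lem:lik}, the right-hand side still carries the factor $u_2$. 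Lemma \ref{lem:bound_u} only bounds $|u_2|$ by $2\log|y|/n+2$, and $\log|y|$ is precisely the quantity about which you know nothing at this point; if $\log|y|$ were of size $0.81^{-n}\approx 1.23^n$, the conclusion $|u_1-u_2|<1$ would simply not follow. You must therefore invoke the cubic bound of Lemma \ref{lem:upperBoundY}, $\log|y|<3.66\cdot 10^{16}n^3$ (legitimately available, since Section \ref{sec:lowerBoundLogY} assumes $n>1000$ only after that lemma is proved); then $|u_2|\cdot 0.81^n$ is astronomically small for $n>1000$ and the step closes. The same patch is needed for your relation $\log|y|=(m-1)n\log 2+L(\ldots)$, whose error term also carries a factor $|m|$. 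A second, smaller issue: the claimed bound $|\delta|\le 0.25^n\log\rho$ is asserted without proof and in fact requires exploiting a cancellation between the two logarithms; but you only need $|\delta|\le\tfrac12\log\rho$, which already follows from the crude triangle-inequality estimate $|\delta|\le 5\cdot 0.25^n$ against $\log\rho\ge\tfrac15(\phi/2)^n$, so this one is harmless.

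Once patched, your proof is correct, but it is instructive to compare it with the paper's, which is much shorter. The paper writes $\beta=\pm\eta_2^{u_2}\eta_3^{u_3}$ in the basis $\{\eta_2,\eta_3\}$ and solves for $u_2$ by Cramer's rule, obtaining $\pm Ru_2=\log|y|\,(l_3^{(2)}-l_3^{(1)})+L(0.5^n)$ in \eqref{eq:u2R}; since $|y|\ge 2$ the right-hand side is \emph{non-zero}, so the integer $u_2$ satisfies $|Ru_2|\ge R>n^2$, and since $|l_3^{(2)}-l_3^{(1)}|<0.81^n$ this flips into $\log|y|>(n^2-0.5^n)/0.81^n>1.2^n$ directly --- no Siegel identity, no linear forms in logarithms, and crucially no appeal to Lemma \ref{lem:upperBoundY}. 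You instead show $u_2=0$ in that basis (your $\beta=\pm\eta_3^m$) and then derive a flat contradiction via \eqref{eq:lambda-upperBound}; note that your bounds rule out \emph{every} $m\ge 2$, so the ``assume $\log|y|\le 1.2^n$, hence $m$ moderate'' framing is unnecessary. In effect you prove the stronger statement that no type-$3$ solutions with $|y|\ge 2$ exist for $n>1000$, and the displayed inequality $\log|y|>1.2^n$ holds only vacuously --- which is logically acceptable, and indeed is all the paper extracts from this lemma in Lemma \ref{lem:absoluteBound}. It is a good sanity check that your intermediate conclusion ($u_2=0$) and the paper's ($u_2\neq 0$) are negations of each other: both deductions are valid precisely because the set of solutions they describe is empty. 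Your identification of $1.2$ with the threshold $2/\phi$ is the right heuristic; in the paper it surfaces as the constant $1/0.81$.
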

\begin{proof}
Assume that $j=3$.
Since $\beta$ is a unit in $\Z[\alpha]$ and $\eta_2=\eta_2^{(1)},\eta_3=\eta_3^{(1)}$ are a fundamental system of units of $\Z[\alpha]$, we can write
\[
	\beta = \pm \eta_2^{u_2} \eta_3^{u_3}.
\] 
Taking absolute values and the logarithm and considering the conjugates, we obtain
\begin{align*}
	\log |\beta^{(1)}| = u_2 l_2^{(1)} + u_3 l_3^{(1)},\\
	\log |\beta^{(2)}| = u_2 l_2^{(2)} + u_3 l_3^{(2)}.
\end{align*}
We solve the system of linear equations for $u_2$ and get
\[
	\pm R u_2	
	=u_2(l_2^{(1)} l_3^{(2)} - l_2^{(2)} l_3^{(1)})
	=  l_3^{(2)} \log|\beta^{(1)}| - l_3^{(1)}\log|\beta^{(2)}|.
\]
Plugging in Equation \eqref{eq:lem:betaiuj-log} from Lemma \ref{lem:betaiuj} yields
\begin{align}
	\pm R u_2
	&= l_3^{(2)} (\log |y| + l_3^{(1)} + L(0.4^n))
		- l_3^{(1)} (\log |y| + l_3^{(2)} + L(0.4^n))\nonumber\\
	&= \log |y| ( l_3^{(2)} - l_3^{(1)}) + L( 2\cdot 0.7n \cdot 0.4^n)
	= \log |y| ( l_3^{(2)} - l_3^{(1)}) + L( 0.5^n), \label{eq:u2R}
\end{align}
where we used Lemma \ref{lem:lik} to estimate $|l_3^{(2)}|,|l_3^{(1)}|\leq 0.7 n$ in the $L$-term.

In order to show that Equation \eqref{eq:u2R} is not zero, we estimate $|l_3^{(2)} - l_3^{(1)}|$ from below using Lemma \ref{lem:roots-estimate}:
\begin{align*}
	l_3^{(1)} - l_3^{(2)}
	&= \log| \alpha^{(1)}-G_3| - \log | \alpha^{(2)} - G_3|\\
	&= \log | L(0.5^n) - 2^n| - \log| F_n + L(0.5^n) - 2^n|\\
	&= \log (2^n ( 1 + L(0.25^n)) - \log \left( 2^n \left( 1 - \frac{F_n}{2^n} + L(0.25^n) \right)\right)\\
	&= \log ( 1 + L(0.25^n)) - \log \left( 1 - \frac{F_n}{2^n} + L(0.25^{n})\right)\\
	&= \log ( 1 + L(0.25^n)) - \log \left ( 1- \frac{1}{\sqrt{5}}\left(\frac{\phi}{2}\right)^n + L(0.31^n) \right)\\
	&> - 2\cdot 0.25^{n} - \frac{1}{2} \left(  - \frac{1}{\sqrt{5}}\left(\frac{\phi}{2}\right)^n + 0.31^n \right)
	> 0.80^n.
\end{align*}
Note that the constant $0.31$ comes from $|\psi|/2\approx 0.309<0.31$ and
$0.80$ comes from $\phi/2 \approx 0.809 > 0.80$.
Now since $|y|\geq 2$, the right hand side of \eqref{eq:u2R} must be non-zero. Thus the integer $u_2$ is non-zero and we get from \eqref{eq:u2R}
\begin{equation}\label{eq:Rcasej3}
	R
	\leq |\pm R u_2|
	=  \log|y| | l_3^{(2)} - l_3^{(1)}| + L(0.5^n).
\end{equation}
Now we estimate $| l_3^{(2)} - l_3^{(1)}|$ from above. As in the above computations we have
\begin{align*}
	| l_3^{(2)} - l_3^{(1)}|	
	=l_3^{(1)} - l_3^{(2)}
	&= \log ( 1 + L(0.25^n)) - \log \left ( 1- \frac{1}{\sqrt{5}}\left(\frac{\phi}{2}\right)^n + L(0.31^n) \right)\\
	&< 0.25^n - 2 \left(-\frac{1}{\sqrt{5}}\left(\frac{\phi}{2}\right)^n - 0.31^n \right)
	< 0.81^n.
\end{align*}	
With this estimate and Lemma \ref{lem:reg_index}, Inequality \eqref{eq:Rcasej3} implies
\[
	\log |y|
	\geq \frac{R-0.5^n}{| l_3^{(2)} - l_3^{(1)}|}
	> \frac{n^2 - 0.5^n}{0.81^n}
	> 1.2^n.
\]
\end{proof}

\begin{lem}\label{lem:j12}
If $n>1000$ and the type of $(x,y)$ is $j\in\{1,2\}$, then
\[
	\log |y|
	> \exp\left( \frac{n}{5.7 \cdot 10^{12}} - \log n - 4 \right).
\]
\end{lem}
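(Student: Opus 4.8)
The plan is to follow the skeleton of the proof of Lemma~\ref{lem:j3} but to supply the one ingredient that is missing in the cases $j\in\{1,2\}$. As there, I would use the representation \eqref{eq:darstellungBeta}, $\beta=\pm\eta_1^{u_1}\eta_2^{u_2}$, and compare the two conjugates $\beta^{(k)},\beta^{(l)}$ attached to the non‑type indices $\{k,l\}=\{1,2,3\}\setminus\{j\}$. Subtracting the two instances of \eqref{eq:lem:betaiuj-log} cancels $\log|y|$ and yields the identity
\[
	u_1\bigl(l_1^{(k)}-l_1^{(l)}\bigr)+u_2\bigl(l_2^{(k)}-l_2^{(l)}\bigr)
	=\log\bigl|\beta^{(k)}/\beta^{(l)}\bigr|
	=l_j^{(k)}-l_j^{(l)}+L(2\cdot 0.4^n).
\]
Reading this as in Lemma~\ref{lem:j3} would give only $\log|y|\gtrsim R/|l_j^{(k)}-l_j^{(l)}|$; but for $j\in\{1,2\}$ the difference $|l_j^{(k)}-l_j^{(l)}|$ is merely \emph{linear} in $n$ (for instance $l_1^{(2)}-l_1^{(3)}\approx n\log(\varphi/2)$ by Lemma~\ref{lem:lik}), not exponentially small, so this route produces only a useless linear lower bound. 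The decisive extra input must come from rewriting the identity as a linear form in logarithms.

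Substituting the explicit expansions of Lemma~\ref{lem:lik} — in which every $l_i^{(k)}$ is an integer combination of $n\log\varphi$, $n\log 2$ and $\log\sqrt5$ with error $L(c\cdot 0.81^n)$ — all the genuinely large terms collapse and the identity turns into an equation
\[
	\Lambda':=a_1\, n\log\varphi+a_2\, n\log 2+a_3\log\sqrt 5=L(c'\cdot 0.82^n),
\]
with explicit integers $a_1,a_2,a_3$ that are linear in $u_1,u_2$ (for $j=1$ one computes $a_1=u_1-u_2-1$, $a_2=-u_1-2u_2+1$, $a_3=-u_1+u_2+1$, and $j=2$ is analogous). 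To keep the accumulated error exponentially small despite the factors $u_1,u_2$, I would invoke the cubic upper bound of Lemma~\ref{lem:upperBoundY} (valid now, since $n>1000$) to bound $\max\{|u_1|,|u_2|\}$ by a polynomial in $n$ via Lemma~\ref{lem:bound_u}. The crucial feature of $\Lambda'$ is that the three numbers inside the logarithms, $\varphi$, $2$ and $\sqrt5$, have height bounded independently of $n$. This is exactly the ``one more'' linear form advertised in the introduction, and it is essential that the units $\eta_i^{(k)}$ — whose heights grow like $n$ — have been eliminated, since with $n$‑dependent heights Matveev would be worthless here.

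Next I would verify $\Lambda'\neq 0$. Since $\log\varphi,\log 2,\log\sqrt5$ are linearly independent over $\Q$ (as $\varphi,2,5$ are multiplicatively independent, which one sees by taking norms in $\Q(\sqrt5)$), the vanishing of $\Lambda'$ would force $a_1=a_2=a_3=0$, which solves to $u_1=1,u_2=0$, i.e.\ $\beta=\pm\eta_1=\pm\alpha$. This gives $x=(y\pm1)\alpha$ and hence $|y|=1$, excluded by assumption; similarly for $j=2$. With $\Lambda'\neq0$ I would apply Lemma~\ref{lem:matveev} in $\K=\Q(\sqrt5)$, so $D=2$ and $\gamma_1,\gamma_2,\gamma_3=\varphi,2,\sqrt5$. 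Then $A_1,A_2,A_3$ are absolute constants, while $B=\max\{|a_1|n,|a_2|n,|a_3|\}$ is bounded by a constant multiple of $n\bigl(2\log|y|/n+2\bigr)$. Hence Matveev yields $\log|\Lambda'|>-C\log(eB)$ with an absolute constant $C$ of size about $10^{12}$.

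Finally I would combine this lower bound with the exponential upper bound $\log|\Lambda'|<\log c'+n\log 0.82$. Rearranging gives $\log(eB)>\bigl(n(-\log 0.82)-\log c'\bigr)/C$, so $B$, and with it $\log|y|$, must be exponential in $n$; carefully tracking the constants (the rate $-\log0.82\approx0.2$ against $C$, and the polynomial factors absorbed into the $-\log n-4$) produces the stated bound $\log|y|>\exp\bigl(n/(5.7\cdot 10^{12})-\log n-4\bigr)$. The main obstacle — and the whole content of the argument — is the second step: recognising that the relation forced on $u_1,u_2$ can be recast as an exponentially small linear form in the three \emph{bounded‑height} logarithms $\log\varphi,\log 2,\log\sqrt5$. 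Once this form is isolated, non‑vanishing, the Matveev input, and the bookkeeping of constants are all routine; the contrast with Lemma~\ref{lem:j3}, where no linear form in logarithms was needed because the analogous difference was already exponentially small for geometric reasons, is precisely what makes $j\in\{1,2\}$ the harder cases.
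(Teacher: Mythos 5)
Your core argument is the paper's argument: after eliminating $\log|y|$, the constraint on $u_1,u_2$ becomes an exponentially small linear form in the three bounded-height logarithms $\log 2$, $\log\phi$, $\log\sqrt5$; your coefficients $(a_1,a_2,a_3)$ for $j=1$ make your $\Lambda'$ exactly the negative of the paper's $\Lambda$ in \eqref{eq:cj1-kombi}, your non-vanishing argument (multiplicative independence plus identifying $u_1=1,u_2=0$ with a trivial solution) is the same, and your Matveev data ($D=2$, absolute $A_i$, $B\le 6\log|y|+7n$, $C\approx 1.13\cdot 10^{12}$) coincide with the paper's. The one structural difference in the derivation is harmless and in fact cleaner: you eliminate $\log|y|$ at the outset by subtracting the two conjugate equations, so the regulator $R$ never appears, whereas the paper solves for $u_1-1$ and $u_2$ separately (its \eqref{eq:u1-fertig2}, \eqref{eq:u2-fertig2}) and then cross-combines.

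There is, however, one loose end in your endgame. Because your $u_1,u_2$ multiply the $L(0.81^n)$ errors, you bound them polynomially (via Lemmas \ref{lem:bound_u} and \ref{lem:upperBoundY}) and arrive at $|\Lambda'|\le c'\cdot 0.82^n$ with an \emph{absolute} constant $c'$. The paper instead retains the factor $\log|y|$, i.e.\ $R|\Lambda|\le\log|y|\cdot 0.82^n$, so that a $\log\log|y|$ term appears on both sides after taking logarithms and the stated bound follows uniformly for all $n>1000$. Your version yields only $6\log|y|+7n>\exp\bigl((n(-\log 0.82)-\log c')/C-1\bigr)$, i.e.\ $\log|y|>\tfrac16\bigl(e^{X'}-7n\bigr)$ with $X'\approx n/(5.69\cdot 10^{12})$; the right-hand side is \emph{negative} for $1000<n\lesssim 2\cdot 10^{14}$, so the inference ``$B$ exponential, hence $\log|y|$ exponential'' is not valid in that range. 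The lemma still holds there, but only because the stated lower bound is then below $\log 2$ and thus trivial from $|y|\ge 2$; conversely, for $n$ above roughly $2.05\cdot 10^{14}$ your inequality does imply the stated one, while triviality covers $n$ up to roughly $2.09\cdot 10^{14}$. So the two regimes overlap and your proof can be completed, but you must make this case split explicit, and you should note that the overlap margin is only a few percent and survives only because your Matveev constant agrees with the paper's $C_2=1.13\cdot 10^{12}$. A cleaner repair: bound the error term by $\log|y|\cdot 0.82^n$ using only Lemma \ref{lem:bound_u} (so Lemma \ref{lem:upperBoundY} is not needed at all here), which reproduces the paper's uniform conclusion.
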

\begin{proof}
This time we consider the fundamental units $\eta_1=\eta_1^{(1)}$ and $\eta_2=\eta_2^{(1)}$ and write
\[
	\beta = \pm \eta_1^{u_1} \eta_2^{u_2},
\]
as in Equation \eqref{eq:darstellungBeta}.
We distinguish between the two cases $j=1$ and $j=2$.

\noindent\textbf{Case 1:} $j=1$.
We take absolute values, logarithms and consider the second and third conjugate:
\begin{align*}
	\log |\beta^{(2)}| = u_1 l_1^{(2)} + u_2 l_2^{(2)},\\
	\log |\beta^{(3)}| = u_1 l_1^{(3)} + u_2 l_2^{(3)}.
\end{align*}
Solving the system of linear equations for $u_1$ we obtain
\begin{equation}\label{eq:u1}
	(l_1^{(2)} l_2^{(3)} - l_1^{(3)} l_2^{(2)}) u_1	
	= l_2^{(3)} \log |\beta^{(2)}| - l_2^{(2)} \log |\beta^{(3)}|.
\end{equation}
By Equation \eqref{eq:lem:betaiuj-log} in Lemma \ref{lem:betaiuj} we have
\begin{align*}
	l_2^{(3)} \log |\beta^{(2)}| - l_2^{(2)} \log |\beta^{(3)}|
	&= \log |y| (l_2^{(3)}-l_2^{(2)}) + l_1^{(2)} l_2^{(3)}  - l_1^{(3)}l_2^{(2)} + (|l_2^{(3)}|+|l_2^{(2)}|) L(0.4^n)\\
	&= \log |y| (l_2^{(3)}-l_2^{(2)}) + (l_1^{(2)} l_2^{(3)}  - l_1^{(3)}l_2^{(2)}) +  L(0.5^n),
\end{align*}
where we used Lemma \ref{lem:lik} to estimate $|l_2^{(3)}|+|l_2^{(2)}| \leq 2n$ and $L(2n \cdot 0.4^n)=L(0.5^n)$.
Together with \eqref{eq:u1} this implies
\begin{equation}\label{eq:u1-fertig}
	\pm R (u_1 - 1)
%	=  (l_1^{(2)} l_2^{(3)} - l_1^{(3)} l_2^{(2)}) (u_1 -1)
	= 	\log |y| (l_2^{(3)}-l_2^{(2)}) + L(0.5^n).
\end{equation}

Similarly, solving for $u_2$ we obtain
\begin{equation*}
	(l_2^{(2)} l_1^{(3)}- l_2^{(3)}l_1^{(2)}) u_2 = l_1^{(3)} \log |\beta^{(2)}| - l_1^{(2)} \log |\beta^{(3)}|
\end{equation*}
and with Equation \eqref{eq:lem:betaiuj-log}
\begin{align}
	\mp R u_2 
%	=(l_2^{(2)} l_1^{(3)}- l_2^{(3)}l_1^{(2)}) u_2\nonumber
	&= \log |y| (l_1^{(3)}-l_1^{(2)}) + l_1^{(3)}l_1^{(2)} - l_1^{(2)}l_1^{(3)}
	+ (|l_1^{(3)}|+|l_1^{(2)}|) L(0.4^n)\nonumber\\
	&= \log |y| (l_1^{(3)}-l_1^{(2)}) + L(0.5^n).\label{eq:u2-fertig}
\end{align}
Since neither of the factors $(l_2^{(3)}-l_2^{(2)}), (l_1^{(3)}-l_1^{(2)})$ is small, we cannot directly repeat the argument from the proof of Lemma \ref{lem:j3}. 
In order to combine Equations \eqref{eq:u1-fertig} and \eqref{eq:u2-fertig} in a helpful way, we estimate these two factors. Using Lemma \ref{lem:lik} we obtain
\begin{align*}
l_2^{(3)}-l_2^{(2)}
	&= n \log 2 + L(0.81^n) - ( -n(\log \phi + \log 2) + \log \sqrt{5} + L(3\cdot 0.81^n))\\
	&= n( 2\log 2 + \log \phi) - \log \sqrt{5} + L(4\cdot 0.81^n),\\
l_1^{(3)}-l_1^{(2)}
	&= n \log 2 + L(0.81^n) - (n\log \phi - \log \sqrt{5} + L(2 \cdot 0.81^n))\\
	&= n(\log 2 - \log \phi) + \log \sqrt{5} + L(3 \cdot 0.81^n).
\end{align*}
Thus \eqref{eq:u1-fertig} and \eqref{eq:u2-fertig} become
\begin{align}
	\pm R (u_1 - 1)
	&= \log |y| (n( 2\log 2 + \log \phi) - \log \sqrt{5} + L(4 \cdot 0.81^n) )
		+ L(0.5^n), \label{eq:u1-fertig2}\\
	\mp R u_2
	&= \log |y| (n(\log 2 - \log \phi) + \log \sqrt{5} + L(3 \cdot 0.81^n) )
		+ L(0.5^n).\label{eq:u2-fertig2}
\end{align}
Now we subtract \eqref{eq:u2-fertig2} multiplied by $(n( 2\log 2 + \log \phi) - \log \sqrt{5})$ from  \eqref{eq:u1-fertig2} multiplied by $(n(\log 2 - \log \phi) + \log \sqrt{5})$ and obtain
\begin{multline}\label{eq:cj1-kombi-davor}
	\pm R ((u_1 -1) (n(\log 2 - \log \phi) + \log \sqrt{5}) + u_2 (n( 2\log 2 + \log \phi) - \log \sqrt{5}))\\
	= \log |y| \cdot L(4\cdot 0.81^n \cdot 0.3n + 3 \cdot 0.81^n \cdot 2n) + L(0.5^n \cdot 3n).
\end{multline}
Using the assumption $n>1000$ to estimate the $L$-terms, we can rewrite the equation as
\begin{equation}\label{eq:cj1-kombi}
	\pm R ( x_1 \log 2 + x_2 \log \phi + x_3 \log \sqrt{5})
	= \log |y| \cdot L(0.82^n),
\end{equation}
where
\begin{align*}
	x_1 &= n(u_1 + 2 u_2 -1),\\
	x_2 &= n(-u_1 + u_2 + 1), \\
	x_3 &= u_1 - u_2 -1. 
\end{align*}
We set
\[
	\Lambda := x_1 \log 2 + x_2 \log \phi + x_3 \log \sqrt{5}.
\]
Since $2$, $\phi$ and $\sqrt{5}$ are multiplicatively independent, $\Lambda$ can only be zero if $x_1=x_2=x_3=0$. For $n>0$ it is easy to see that this is only the case if $u_1=1$ and $u_2=0$.
But then we have $x-\alpha y =\beta=\pm \eta_1^{u_1}\eta_2^{u_2}= \pm \eta_1 = \pm \alpha$, which means that we have the trivial solution $(x,y)=(0,\mp 1)$. Thus we may assume $\Lambda \neq 0$ and apply Matveev's bound (Lemma \ref{lem:matveev}) to $\Lambda$ with $\gamma_1=2$, $\gamma_2= \phi$, $\gamma_3=\sqrt{5}$ and $b_i=x_i$ ($i=1,2,3$). We have $D=2$ and we may set
\begin{align*}
A_1 &:= 1.4 > \max\{2 h(2),|\log 2|, 0.16\} = 2 \log 2,\\
A_2 &:=  0.5 > \max\{2 h(\phi),|\log \phi|, 0.16\} = \log \phi,\\
A_3 &:=  1.7 > \max\{2 h(\sqrt{5}),|\log \sqrt{5}|, 0.16\} = \log 5.
\end{align*}
Moreover, using Lemma \ref{lem:bound_u} we can set
\begin{align*}
	\max\{|x_1|,|x_2|,|x_3|\}
	&\leq n( |u_1| + 2|u_2| + 1) 
	< n \left( \frac{2\log|y|}{n} + 2 + 2 \left( \frac{2\log|y|}{n} +2\right) +1 \right)\\
	&= 6 \log |y| + 7n =:B. 
\end{align*}
Then Lemma \ref{lem:matveev} yields
\begin{equation}\label{eq:Lambda-lowerbound}
	\log |\Lambda|
	> - C_2 \log(e (6 \log |y| + 7n)),
\end{equation}
with $ C_2= 1.13 \cdot 10^{12} > 1.4\cdot 10^{11} \cdot 2^2 \cdot \log (e\cdot 2) \cdot 1.4 \cdot 0.5 \cdot 1.7$.

Recall that by \eqref{eq:cj1-kombi} we have
\begin{equation}\label{eq:lambda2}
	R \cdot |\Lambda| \leq \log |y| \cdot 0.82^n,
\end{equation}
which implies
\[
	\log |\Lambda|
	\leq \log \log |y| + n \log 0.82 - \log R.
\]
Combining this with \eqref{eq:Lambda-lowerbound} and noting that $\log R> 0$ by Lemma \ref{lem:reg_index} we obtain
\[
	- C_2 \log(e (6 \log |y| + 7n))
	< \log \log |y| + n \log 0.82.
\]
Since $\log(e (6 \log |y| + 7n)) <  \log \log |y| + \log n + 4$, this yields
\[
	(-\log 0.82) \cdot n - C_2 \log n - 4 C_2 
	< (C_2 + 1) \log \log |y|.
\]
This implies
\[
	\frac{n}{(C_2+1)/(-\log 0.82)} - \log n - 4
	< \log \log |y|,
\]
which is the desired lower bound.

\noindent\textbf{Case 2:} $j=2$.
This time we consider the first and third conjugate:
\begin{align*}
	\log |\beta^{(1)}| = u_1 l_1^{(1)} + u_2 l_2^{(1)},\\
	\log |\beta^{(3)}| = u_1 l_1^{(3)} + u_2 l_2^{(3)}.
\end{align*}
Solving the system of linear equations for $u_1$ and $u_2$ and using Equation \eqref{eq:lem:betaiuj-log}, we obtain analogously to Case 1:
\begin{align}
	\pm R u_1
	&= 	\log |y| (l_2^{(3)}-l_2^{(1)}) + L(0.5^n),\label{eq:u1-fertig-casej2}\\
	\mp R ( u_2 - 1)
	&= \log |y| (l_1^{(3)}-l_1^{(1)}) + L(0.5^n).\label{eq:u2-fertig-casej2}
\end{align}
We estimate the factors $(l_2^{(3)}-l_2^{(1)})$ and $(l_1^{(3)}-l_1^{(1)})$ using Lemma \ref{lem:lik}:
\begin{align*}
l_2^{(3)}-l_2^{(1)}
%	&= n \log 2 + L(0.82^n) - (n\log \phi - \log \sqrt{5} + L(0.82^n))\\
	&= n(\log 2 - \log \phi) + \log \sqrt{5} + L(3\cdot 0.81^n),\\
l_1^{(3)}-l_1^{(1)}
%	&= n \log 2 + L(0.82^n) - ( -n(\log \phi + \log 2) + \log \sqrt{5} + L(0.82^n)\\
	&= n( 2\log 2 + \log \phi) - \log \sqrt{5} + L(4 \cdot 0.81^n).
\end{align*}
Thus \eqref{eq:u1-fertig-casej2} and \eqref{eq:u2-fertig-casej2} become
\begin{align*}
	\pm R u_1
	&= \log |y| (n(\log 2 - \log \phi) + \log \sqrt{5} + L(3\cdot 0.81^n) )
		+ L(0.5^n),\\
	\mp R ( u_2 - 1)
	&= \log |y| (n( 2\log 2 + \log \phi) - \log \sqrt{5} + L(4 \cdot 0.81^n) )
		+ L(0.5^n).
\end{align*}
But now we are in exactly the same situation as in Case 1, except that $u_1$ and $u_2$ have switched roles. Thus we can repeat all the arguments from above. The only detail that changes is that $\Lambda=0$ then yields $u_1=0, u_2=1$, which corresponds to the trivial solution $(x,y)=\pm(F_n, 1)$.
\end{proof}

\begin{rem}
In \cite{Heuberger2001}, the corresponding part to our Lemma \ref{lem:j12} is Section 7. In \cite[Sec. 7]{Heuberger2001} the computations are more complicated because the Thue equations are of arbitrary degree. However, the computations there do not require another application of lower bounds for linear forms in logarithms: Because the author takes logarithms of polynomials (instead of linear recurrence sequences), he gets coefficients that involve the exponents of the polynomials, i.e. integers. Thus the equation corresponding to \eqref{eq:cj1-kombi-davor} has an integer factor on the left hand side instead of some logarithms, and it is enough to check that the equation is non-zero in order to get a good lower bound.

We, on the other hand, had to apply a lower bound for linear forms in logarithms. Note that another way to obtain the linear form in logarithms is to modify the linear form from the proof of Lemma \ref{lem:upperBoundY}. Such a trick has been done in \cite{HilgartVukusicZiegler2021}.
\end{rem}

\section{Absolute bound for $n$ and finishing the proof}\label{sec:finish}

Comparing the bounds in Lemma \ref{lem:upperBoundY}, Lemma \ref{lem:j3} and Lemma \ref{lem:j12}, we immediately obtain an absolute bound for $n$:

\begin{lem}\label{lem:absoluteBound}
There are no non-trivial solutions with $n>1000$ and $j=3$.
If $j\in \{1,2\}$, then
\[
	n < 1.03 \cdot 10^{15}.
\]
\end{lem}

In order to finish the proof of Theorem \ref{thm:main} we only need to deal with the cases $j=1,2$ and $1000< n < 1.03 \cdot 10^{15}$.

\begin{lem}\label{lem:j12-reduktion}
There are no non-trivial solutions with $1000< n < 1.03 \cdot 10^{15}$ and $j\in \{1,2\}$.
\end{lem}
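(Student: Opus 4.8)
The plan is to finish the last remaining case by an explicit reduction of the huge bound $n<1.03\cdot10^{15}$ (from Lemma~\ref{lem:absoluteBound}) down below $1000$, thereby contradicting the assumption $n>1000$ and leaving no room for non-trivial solutions. The natural tool is the linear form $\Lambda = x_1\log 2 + x_2\log\phi + x_3\log\sqrt5$ together with the sharp upper bound \eqref{eq:lambda2}, namely $R\,|\Lambda|\le \log|y|\cdot 0.82^n$. Since by Lemma~\ref{lem:reg_index} we have $R>n^2$, this rearranges to
\[
	|\Lambda| < \frac{\log|y|\cdot 0.82^n}{n^2}
	< \log|y|\cdot 0.82^n.
\]
Here $\Lambda$ is, up to the integer factors $x_i$, a \emph{fixed} linear form in the three constants $\log 2,\log\phi,\log\sqrt5$, which do not depend on $n$. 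This is exactly the setting of the LLL reduction in Lemma~\ref{lem:LLL}.

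\textbf{Key steps.}
First I would combine Lemma~\ref{lem:j12} with Lemma~\ref{lem:absoluteBound} to record that in the range under consideration $\log|y|$ is bounded below by a quantity exponential in $n$, while $\log|y|$ is simultaneously bounded above by the cubic bound $3.66\cdot10^{16}n^3$ of Lemma~\ref{lem:upperBoundY}; these together already force $n<1.03\cdot10^{15}$, and I would use the cubic \emph{upper} bound on $\log|y|$ to make the right-hand side of the inequality for $|\Lambda|$ fully explicit. Second, I would bound the coefficients: by Lemma~\ref{lem:bound_u} and the definitions of $x_1,x_2,x_3$ we have $\max\{|x_1|,|x_2|,|x_3|\}\le B = 6\log|y|+7n$, so a numerical constant $M$ bounding the $|x_i|$ can be written purely in terms of $n$. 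Third, I would apply Lemma~\ref{lem:LLL} with $\gamma_1=2,\gamma_2=\phi,\gamma_3=\sqrt5$ and a suitably large constant $C>M^3$; the lemma produces a \emph{lower} bound $|\Lambda|>\tfrac1C(\sqrt{c^2-S}-T)$ provided the shortest Gram--Schmidt vector satisfies $c^2>T^2+S$. Fourth, I would confront this LLL lower bound with the analytic upper bound $|\Lambda|<\log|y|\cdot 0.82^n$: taking logarithms yields an inequality of the shape $n\log(1/0.82) < (\text{polynomial in }\log n)$, which is violated for all $n$ above some threshold well below $1000$. The conclusion is that no such $n$ in the range $1000<n<1.03\cdot10^{15}$ can support a solution.

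\textbf{Main obstacle.}
The principal difficulty is that the LLL machinery in Lemma~\ref{lem:LLL} is stated for a \emph{single} fixed form, whereas here the bound $M$ on the coefficients grows with $n$, and $\log|y|$ enters both $B$ and the target inequality. Since the constants $\log 2,\log\phi,\log\sqrt5$ are genuinely fixed, one LLL reduction of the lattice spanned by these three logarithms (with a single large $C$ chosen to dominate $M^3$ for the worst case $n$ close to $1.03\cdot10^{15}$) suffices for the whole range at once; I would verify computationally that the resulting $c$ satisfies $c^2>T^2+S$ and that the ensuing lower bound on $|\Lambda|$ exceeds the upper bound $\log|y|\cdot0.82^n$ for every $n>1000$. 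The delicate point is checking that a \emph{single} choice of $C$ works uniformly, rather than needing to iterate the reduction as $n$ varies; given the exponential gap $0.82^n$ against the merely polynomial growth of $\log|y|$ and $M$, I expect one pass to close the gap comfortably, but confirming the inequality $c^2>T^2+S$ and reducing $n$ below $1000$ is where the actual (machine-assisted) work lies.
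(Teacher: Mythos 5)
Your overall strategy is exactly the paper's: apply the LLL reduction of Lemma~\ref{lem:LLL} to the fixed form $\Lambda = x_1\log 2 + x_2\log\phi + x_3\log\sqrt{5}$, using the upper bound \eqref{eq:lambda2} combined with Lemmas~\ref{lem:reg_index} and \ref{lem:upperBoundY}, and the coefficient bound $6\log|y|+7n$ from Lemma~\ref{lem:bound_u}. The genuine gap is your resolution of the point you yourself flag as delicate: the claim that a \emph{single} LLL pass, with $M$ taken at the worst case $n$ close to $1.03\cdot 10^{15}$, "suffices for the whole range at once." This is quantitatively false, and no choice of $C$ can rescue it. With $M = 2.2\cdot 10^{17}\cdot(1.03\cdot 10^{15})^3 \approx 2.4\cdot 10^{62}$, the hypothesis $C>M^3$ together with the fact that the shortest Gram--Schmidt vector of a three-dimensional lattice of determinant roughly $C$ has length of order at most $C^{1/3}$ forces the lower bound $\frac{1}{C}\bigl(\sqrt{c^2-S}-T\bigr)$ to be of order at most $C^{-2/3} \le M^{-2} \approx 10^{-125}$; indeed the paper obtains $6.37\cdot 10^{-127}$ with $C=10^{192}$. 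But the analytic upper bound $3.66\cdot 10^{16}\cdot 0.82^n\cdot n$ at $n$ just above $1000$ is about $10^{-67}$, which is enormously \emph{larger} than $10^{-125}$, so there is no contradiction there: a single pass only rules out $n \gtrsim 1694$ (this is precisely what the paper's first reduction yields), and your proposed verification "for every $n>1000$" would fail on the range $1000 < n < 1694$. Your claim that the threshold lands "well below $1000$" is exactly what breaks down, because the attainable LLL lower bound is tied to $M$, hence to the \emph{current} upper bound on $n$, not to the $n$ one is trying to exclude.

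The missing idea is iteration. Having reduced to $n<1694$, one reruns the reduction with the much smaller $M(1694)\approx 10^{27}$ and a correspondingly smaller $C$ (the paper uses $10^{83}$), obtaining a lower bound of about $3.67\cdot 10^{-56}$ and hence $n<870<1000$, which closes the argument. This second pass is essential, not an optional refinement: the exponential decay $0.82^n$ only becomes decisive against the $M$-dependent LLL bound after the first pass has shrunk the upper bound on $n$, and thereby $M$, by many orders of magnitude. A minor further remark: you also dropped the factor $R^{-1} < n^{-2}$ from \eqref{eq:lambda2}, writing $|\Lambda| < \log|y|\cdot 0.82^n$; this costs a factor $n^2$ in the upper bound and is harmless in itself, but it makes the single-pass hope still less tenable.
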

\begin{proof}
We go back to Inequality \eqref{eq:lambda2} from the proof of Lemma \ref{lem:j12}, which together with Lemma \ref{lem:reg_index} and Lemma \ref{lem:upperBoundY} implies
\begin{align*}
	|\Lambda|
	=|x_1 \log 2 + x_2 \log \phi + x_3 \log \sqrt{5}|
	&\leq \log|y| \cdot 0.82^n \cdot R^{-1}\\
	&< 3.66 \cdot 10^{16} \cdot n^3 \cdot 0.82^n \cdot n^{-2}
	= 3.66 \cdot 10^{16} \cdot 0.82^n \cdot n,
\end{align*}
where the $x_i$ are integers bounded in abolute values by
\[
	6 \log |y| + 7n 
	< 6 \cdot 3.66 \cdot 10^{16} \cdot n^3 + 7n
	< 2.2 \cdot 10^{17} \cdot n^3
	:=M(n).
\]
Then we apply the LLL-algorithm to find an absolute lower bound for $|\Lambda|$ as described in Lemma \ref{lem:LLL} with $M=M(1.03 \cdot 10^{15})$. To obtain the matrices $B$ and $B^*$ we use the matrix attributes \verb|LLL()| and \verb|gram_schmidt()| of Sage \cite{sagemath}.
Indeed, with $C=10^{192}$ the procedure works and we obtain 
\[
	6.37 \cdot 10^{-127}
	< |\Lambda|
	< 3.66 \cdot 10^{16} \cdot 0.82^n \cdot n,
\]
which implies $n< 1694$.

We repeat the whole procedure once more, this time using $M(1694)$ and $C=10^{83}$. Then we obtain the inequality
\[
	3.67 \cdot 10^{-56}
	< |\Lambda|
	< 3.66 \cdot 10^{16} \cdot 0.82^n \cdot n,
\]
which implies $n<870$ and we are done.
\end{proof}

\begin{rem}
In the course of this paper we did a lot of estimations, many of which were not very sharp but also not relevant. For instance, the term $L(0.5^n)$ in Lemma \ref{lem:roots-estimate} could have been chosen much smaller, but that would have had no effect. The estimate that probably played the most important role was $\phi/2 \approx 0.81$. This played a role in Lemma \ref{lem:j3} (it resulted in the constant $1.2$) as well as in the proofs of Lemma \ref{lem:j12} and Lemma~\ref{lem:j12-reduktion} (where it finally showed as the constant $0.82$). This suggests that in the generalised problem
\[
	X(X-G_1(n) Y)(X-G_2(n) Y) - Y^3 = \pm 1,
\]
where $G_1(n), G_2(n)$ are any linear recurrence sequences,  
the ratio between the dominant roots of $G_1,G_2$ might be essential for the approach used in this paper.
\end{rem}

\section*{Acknowledgement}
The author wants to thank Volker Ziegler for his encouragement and many helpful discussions and suggestions.

\bibliographystyle{plain}
\bibliography{lit_thom}

\end{document}